\newtheorem{theorem}{Theorem}[section]
\newtheorem{corollary}[theorem]{Corollary}
\newtheorem{lemma}[theorem]{Lemma}
\newtheorem{proposition}[theorem]{Proposition}
\theoremstyle{definition}
\newtheorem{definition}[theorem]{Definition}
\newtheorem{example}[theorem]{Example}
\newtheorem*{definition*}{Definition}
\newtheorem{remark}[theorem]{Remark}
\newtheorem*{lemma*}{Lemma}
\newtheorem*{proposition*}{Proposition}
\newtheorem*{theorem*}{Theorem}
\newtheorem*{corollary*}{Corollary}
\theoremstyle{definition}
\newtheorem{question}[theorem]{Question}
\newcommand{\Homeo}{\operatorname{Homeo}}
\newcommand{\cl}{\operatorname{cl}}
\newcommand{\scl}{\operatorname{scl}}
\newcommand{\rot}{\operatorname{rot}}
\begin{document}

\title{Algebraic irrational stable commutator length \\ in finitely presented groups}
\author{Francesco Fournier-Facio and Yash Lodha}
\date{\today}
\maketitle

\begin{abstract}
We provide the first example of a finitely presented (in fact, type $F_\infty$) group with elements whose stable commutator length is algebraic and irrational, answering a question of Calegari. Our example is the lift to the real line of the \emph{golden ratio Thompson group} $T_\tau$: the circle analogue of the Cleary's golden ratio Thompson group $F_\tau$ which acts on the interval.
\end{abstract}

\section{Introduction}

Stable commutator length (usually referred to as $\scl$) is a real invariant of groups that allows for algebraic, topological, and analytic characterizations, and has several applications in topology and geometry \cite{calegari}. 
Consider a group $G$. The \emph{commutator length} of an element $g$ in the commutator subgroup $G'$, denoted as $\cl(g)$, is the smallest $n\in \mathbf{N}$ such that $g$ can be expressed as a product of at most $n$ commutators of elements in $G$. Then we define the \emph{stable commutator length} of $g$ as $\scl(g) := \lim_{n\to \infty} \frac{\cl(g^n)}{n}$.
If there exists $k \geq 1$ such that $g^k \in G'$, then we set $\scl(g) := \frac{\scl(g^k)}{k}$. Otherwise, we set $\scl(g) := \infty$.
While it is not too hard to show that the limit above always exists, in general it is hard to compute. 
Given a group $G$, we denote $\scl(G) := \{\scl(g)\mid g\in G\}$.\\

Given a class $\mathcal{G}$ of groups, the \emph{spectrum of scl on} $\mathcal{G}$ is defined as $\{r\in \mathbf{R}_{\geq 0} \mid \exists G\in \mathcal{G}\text{ such that }r\in \scl(G)\}$.
A question that has received a lot of recent attention is to compute this on certain classes of groups. For the classes of finitely generated groups, and finitely generated recursively presented groups, this question has been completely answered by Heuer in \cite{heuer}. However the full picture for finitely presented groups remains mysterious \cite[Question 5.48]{calegari}. \\

It has been shown for several classes of groups that the spectrum of $\scl$ consists entirely of rationals. In fact, the lift to the real line of Thompson's group $T$ is a group of type $F_\infty$ (so in particular finitely presented) whose $\scl$ spectrum coincides with the positive rationals \cite{ghys_serg}.

For other classes of groups, such as free \cite{free} and Baumslag--Solitar groups \cite{gog}, the computation of $\scl$ is more involved, and its rationality is proven using algorithms which rely on the topological definition of $\scl$ via surfaces \cite[Proposition 2.10]{calegari}. This definition involves an infimum of rational numbers, and a feature of these algorithms is that in some cases this infimum is realized by certain surfaces, called \emph{extremal}. \\

On the other hand, if an element has irrational $\scl$, then extremal surfaces cannot exist. In \cite[6 C]{gromov}, Gromov asked whether this situation can occur in finitely presented groups. The first examples were found by Zhuang \cite{zhuang}, who showed that lifts of certain Stein--Thompson's groups have elements with transcendental $\scl$. Since then other examples have emerged \cite[Chapter 5]{calegari}, but the $\scl$ always appears to be either rational or transcendental. This leads to the following natural question:

\begin{question}[Calegari {\cite[Question 5.47]{calegari}}]
\label{q}

Is there a finitely presented group in which $\scl$ takes on an irrational value that is algebraic?
\end{question}

This question was also asked in \cite{book} and mentioned in \cite{heuer}. Here we answer it in the positive:

\begin{theorem}
\label{thm:main}

There exists a finitely presented (in fact, type $F_\infty$) group $G$ such that $\scl(G)$ contains all of $\frac{1}{2} \mathbf{Z}[\tau]$, where $\tau = \frac{\sqrt{5} - 1}{2}$ is the small golden ratio.
\end{theorem}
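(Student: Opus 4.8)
The plan is to take $G$ to be $\widetilde{T_\tau}$, the group of all lifts to $\mathbf{R}$ of elements of the golden ratio Thompson group $T_\tau \leq \Homeo^+(S^1)$. This is a central extension $1 \to \mathbf{Z} \to G \to T_\tau \to 1$ in which the central $\mathbf{Z}$ is generated by the unit translation $z\colon x\mapsto x+1$. Two things have to be established. First, $G$ is of type $F_\infty$: it is known that $T_\tau$ is of type $F_\infty$, and a central extension of a group of type $F_\infty$ by $\mathbf{Z}$ is again of type $F_\infty$. Second --- and this is the heart of the matter --- for every $\alpha\in\mathbf{Z}[\tau]$ the translation $R_\alpha\colon x\mapsto x+\alpha$ lies in $G$ and satisfies $\scl_G(R_\alpha)=\tfrac{1}{2}|\alpha|$. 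Membership is immediate: $R_\alpha$ is piecewise linear with every slope equal to $1=\tau^0$ and no breakpoints, and it descends to the rotation of $S^1$ by $\{\alpha\}\in\mathbf{Z}[\tau]$, which is therefore an element of $T_\tau$; hence $R_\alpha$, being a lift of it, lies in $G$. Granting the $\scl$ computation, as $\alpha$ ranges over $\mathbf{Z}[\tau]$ the quantity $\tfrac12|\alpha|$ ranges over every non-negative element of $\tfrac12\mathbf{Z}[\tau]$, which --- since $\scl$ is a non-negative invariant --- is exactly what the theorem asserts.

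The $\scl$ computation goes through Bavard duality and the Poincar\'e translation number $\widetilde{\rot}\colon G\to\mathbf{R}$, $\widetilde{\rot}(f)=\lim_n f^n(0)/n$, which is a homogeneous quasimorphism with $\widetilde{\rot}(z)=1$ and $\widetilde{\rot}(R_\alpha)=\alpha$. I would first record that its defect on $G$ equals $1$: it is at most $1$ because $\widetilde{\rot}$ is the restriction of the translation quasimorphism on $\widetilde{\Homeo}^+(S^1)$, and it is at least $1$ because $T_\tau$ contains two elements whose lifts $a,b\in G$ satisfy $[a,b]=z$ (realising Euler number $1$), so that $D(\widetilde{\rot}|_G)\geq|\widetilde{\rot}(z)|=1$. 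Bavard duality then yields the lower bound $\scl_G(R_\alpha)\geq|\widetilde{\rot}(R_\alpha)|/(2\cdot1)=|\alpha|/2$.

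For the matching upper bound I would show that $\widetilde{\rot}$ is, up to scaling, the \emph{only} homogeneous quasimorphism on $G$. The key point is that any homogeneous quasimorphism on $G$ is invariant under multiplication by the central element $z$, hence descends to a homogeneous quasimorphism on $T_\tau$; so it suffices to know that $T_\tau$ admits no nonzero homogeneous quasimorphism. Granting this, the space of homogeneous quasimorphisms on $G$ is precisely $\mathbf{R}\,\widetilde{\rot}$; in particular $G$ has finite abelianization, so each $R_\alpha$ has a power in $[G,G]$, and Bavard duality collapses to the single equality $\scl_G(g)=|\widetilde{\rot}(g)|/2$, valid for every $g\in G$. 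Evaluating at $R_\alpha$ completes the proof.

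The crux --- and the step I expect to require the most work --- is the vanishing of homogeneous quasimorphisms on $T_\tau$, i.e.\ that $T_\tau$ has finite abelianization and that every homogeneous quasimorphism on it is a homomorphism. I would obtain this from bounded acyclicity of $T_\tau$, which should follow from the fact that the action of $T_\tau$ on $S^1$ leaves ``enough room'' in the sense of the commuting-conjugates criteria used for Thompson-like groups; this is the one place where the specific $\tau$-adic combinatorics of $T_\tau$ is used rather than soft properties common to large groups of circle homeomorphisms, and it has to be carried out afresh for the golden ratio subdivision rule. It is also worth isolating as a small lemma the observation that $R_\alpha\in T_\tau$ for all $\alpha\in\mathbf{Z}[\tau]$: it is elementary because these rotations have no breakpoints, but it is exactly what feeds the golden ratio into the $\scl$ spectrum, and it is precisely the point at which $T_\tau$ departs from the classical Thompson group $T$, whose rotations have only rational angles --- so that $\widetilde{T}$ has rational $\scl$ spectrum while $\widetilde{T_\tau}$ does not.
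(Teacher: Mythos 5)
Your overall strategy is the paper's: take the total lift $G=\overline{T_\tau}$, show the space of homogeneous quasimorphisms of $G$ is spanned by $\rot$, pin down $D(\rot|_G)=1$, and apply Bavard duality to the lifted rotations. However, the two steps you leave to be supplied are exactly where your plan breaks. The crux step --- vanishing of homogeneous quasimorphisms on $T_\tau$ --- cannot be obtained from bounded acyclicity of $T_\tau$, because $T_\tau$ is \emph{not} boundedly acyclic: the real bounded Euler class of its circle action is non-trivial in degree-two bounded cohomology. Indeed, if it vanished, then $\rot$ on $G$ would be at bounded distance from a homomorphism, hence (being homogeneous) a homomorphism, and $\scl$ would vanish identically on $G'$ --- contradicting the very conclusion you are after; the non-triviality of this bounded class is precisely what makes the theorem work. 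Relatedly, the commuting-conjugates criteria you invoke require displacing supports of finitely generated subgroups, which is impossible for a group acting on all of $\mathbf{S}^1$ (they apply to $F_\tau$-like groups, not $T$-like ones). What you actually need is only the much weaker statement that every homogeneous quasimorphism on $T_\tau$ is a homomorphism and that the abelianization of $T_\tau$ is torsion, equivalently that $\scl$ vanishes on $T_\tau$. The paper gets this from \emph{uniform} perfectness: $T_\tau'$ is uniformly simple, proved via the Gal--Gismatullin proximality criterion applied to the point-stabilizer copies of $F_\tau^c$ together with a factorization lemma writing any element of $T_\tau$ as a product of two elements supported away from points, plus the fact that $[T_\tau:T_\tau']=2$. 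Plain simplicity of $T_\tau'$ (already known) is not sufficient here, and this uniformity is the genuinely new input; your proposal does not supply a workable substitute.

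The second problem is your lower bound on the defect: there are no $a,b\in G$ (nor even in $\overline{\Homeo^+(\mathbf{R}/\mathbf{Z})}$) with $[a,b]=z$, the unit translation. If there were, their images in $\Homeo^+(\mathbf{R}/\mathbf{Z})$ would commute, so $a,b$ would lie in the preimage of an abelian (hence amenable) group; on such a preimage $\rot$ is a genuine homomorphism (use an invariant probability measure), forcing $\rot([a,b])=0\neq 1$. Equivalently, by Milnor--Wood, a flat topological circle bundle over the torus has Euler number $0$, so Euler number $1$ is not ``realised'' by a single commutator. The conclusion $D(\rot|_G)=1$ is nonetheless true, but needs a different argument: the paper shows $T_\tau$ acts transitively on circularly ordered tuples in the dense set $\mathbf{Z}[\tau]/\mathbf{Z}$, hence is $C^0$-dense in $\Homeo^+(\mathbf{R}/\mathbf{Z})$, so $G$ is dense in $\overline{\Homeo^+(\mathbf{R}/\mathbf{Z})}$, where $\rot$ is continuous and has defect exactly $1$. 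Two smaller points: type $F_\infty$ of $T_\tau$ is not ``known'' --- only finite presentability was; the paper proves $F_\infty$ via Brown's criterion on the complex of tuples in $\mathbf{Z}[\tau]/\mathbf{Z}$, with stabilizers powers of $F_\tau$ (Cleary). And from one-dimensionality of the quasimorphism space you may only conclude that the abelianization of $G$ is torsion (which suffices, and is in fact $\mathbf{Z}/2\mathbf{Z}$), not that it is finite.
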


The group $G$ is the lift to the real line of the \emph{golden ratio Thompson's group} $T_\tau$, which is the natural ``circle version" of the group $F_{\tau}$ defined by Cleary in \cite{cleary}. 
(Just like Thompson's group $T$ is the ``circle version" of Thompson's group $F$.)
The group $T_{\tau}$ differs from $T$ for two main reasons: it contains irrational rotations (while every element in $T$ has a periodic point) and its commutator subgroup has index two and is simple (while $T$ itself is simple).

The group $T_\tau$ was studied in \cite{burilloT} from a combinatorial viewpoint; here we will treat it dynamically, since it is its realization as a group of homeomorphisms of the circle that establishes the connection with $\scl$. Our approach is analogous to that of \cite{zhuang}, in that we reduce the computation of $\scl$ to a computation of rotation numbers in $T_\tau$. On the way we also prove that $T_\tau$ is of type $F_\infty$ and that $T_\tau'$ is uniformly simple, strengthening the results from \cite{burilloT}. \\

\textbf{Acknowledgements:} The first author was supported by an ETH Z\"urich Doc.Mobility Fellowship. The second author was supported by a START-preis grant of the Austrian science fund. The authors would like to thank Matt Brin and Justin Moore for their helpful comments, and Nicolaus Heuer for introducing them to this question and to stable commutator length.

\section{Preliminaries}

Group actions will always be on the right. Accordingly, we will use the convention $[g, h] = g^{-1} h^{-1} g h$ for commutators.

\subsection{Dynamics of group actions on $1$-manifolds}

Given $g\in \Homeo^+(M)$ for a given $1$-manifold $M$, we define the \emph{support of $g$} as: $$Supp(g) := \{x\in M\mid x\cdot g\neq x\}.$$
We identify $\mathbf{S}^1$ with $\mathbf{R}/\mathbf{Z}$.
Recall that $\Homeo^+(\mathbf{R})$ and $\Homeo^+(\mathbf{R/Z})$ are the groups of orientation preserving homeomorphisms of the real line and the circle. The latter group action admits a so-called \emph{lift} to the real line, which is the group $\overline{\Homeo^+(\mathbf{R}/\mathbf{Z})}\leq \Homeo^+(\mathbf{R})$ which equals the full centralizer of the group of integer translations $\mathbf{Z}=\{x\mapsto x+n\mid n\in \mathbf{Z}\}$ inside $\Homeo^+(\mathbf{R})$. There is a short exact sequence $$1\to \mathbf{Z}\to \overline{\Homeo^+(\mathbf{R}/\mathbf{Z})} \to \Homeo^+(\mathbf{R}/\mathbf{Z}) \to 1.$$
In this paper, we fix the notation for the map $$\eta: \overline{\Homeo^+(\mathbf{R}/\mathbf{Z})}\to \Homeo^+(\mathbf{R}/\mathbf{Z}).$$

Given a group $G\leq \Homeo^+(\mathbf{R}/\mathbf{Z})$, we define the \emph{total lift}
of $G$ to be the group $\eta^{-1}(G)$, which is a central extension of $G$ by $\mathbf{Z}$. Throughout this paper, we will denote the total lift of $G$ by $\overline{G}$.
More generally, we may define a \emph{lift} of $G$ to $\mathbf{R}$ as a group action $H\leq \overline{\Homeo^+(\mathbf{R}/\mathbf{Z})}$ satisfying that $\eta(H)=G$. Such a lift may not be unique.
On the other hand, the total lift of $G$ is the group generated by all possible such lifts.

A group action $G\leq \Homeo^+(\mathbf{R})$ is said to be \emph{boundedly supported} if for each $f\in G$ there is a compact interval $I\subset \mathbf{R}$ such that for all $x \in \mathbf{R}\setminus I$ it holds that $x\cdot f=x$.
A group action $G\leq \Homeo^+(\mathbf{R})$ is said to be \emph{proximal}, if for each pair of nonempty open sets $I,J\subset \mathbf{R}$ satisfying that $J$ is bounded, there is an element $f\in G$ such that $J\cdot f\subset I$. \\

Recall that a group $G$ is \emph{$n$-uniformly simple} for a fixed $n\in \mathbf{N}$, if for each pair of elements $f,g\in G\setminus \{id\}$, $f$ can be expressed as a product of at most $n$ conjugates of the elements $g,g^{-1}$.
$G$ is said to be \emph{$n$-uniformly perfect} if each element $f\in G\setminus \{id\}$
can be expressed as a product of at most $n$ commutators of elements in $G$. 
A group is uniformly simple (or uniformly perfect) if there exists an $n\in \mathbf{N}$ such that the group is $n$-uniformly simple (respectively, $n$-uniformly perfect).
It is easy to see that uniformly simple groups are uniformly perfect.
The following is a special case of Theorem $1.1$ in \cite{prox}.

\begin{theorem}[Gal, Gismatullin \cite{prox}]
\label{thm:uniformly simple}
Let $G\leq \Homeo^+(\mathbf{R})$ be a boundedly supported and promixal group action.
Then $G'$ is $6$-uniformly simple and $2$-uniformly perfect.
\end{theorem}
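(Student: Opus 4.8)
The plan is to deduce both assertions from a single \emph{displacement trick}, using the dynamical hypotheses only to manoeuvre into position to apply it. Call $f \in G$ \emph{compressible} if $f$ is the identity outside some compact interval $I$ for which there is $t \in G$ with the translates $I, I\cdot t, I\cdot t^2, \ldots$ pairwise disjoint and contained in a single compact interval. For compressible $f$ one has the classical identity $f = [b^{-1}, t]$, where $b := \prod_{n \geq 0} t^{-n} f t^n$ is a well-defined orientation-preserving homeomorphism of $\mathbf{R}$ (at each point at most one factor is nontrivial), since $t^{-1} b t = \prod_{n \geq 1} t^{-n} f t^n$ and the factors pairwise commute. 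The only subtlety is that $b$ need not lie in $G$; the standard remedy is a finite telescoping variant of this computation, which keeps everything inside $G$ at the cost of expressing a compressible $f$ as a product of at most \emph{two} commutators of elements of $G$. This is the source of the constant in ``$2$-uniformly perfect''.

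\textbf{Step 1: after conjugation, every element of $G$ is compressible.} I would first use proximality once to manufacture a bounded open interval $W$ and an element $t \in G$ with $\{W\cdot t^n\}_{n\geq 0}$ pairwise disjoint and of bounded union: proximality forces some element of $G$ to move an open interval strictly into itself without fixed points there, and the interior of a fundamental domain for that element on that interval serves as $W$. Now take arbitrary $f \in G$; by bounded support $f$ is the identity outside a compact interval $I_0$, and proximality gives $h \in G$ with $I_0 \cdot h \subseteq W$, so $f^h$ is supported in $W$, hence compressible, hence a product of at most two commutators in $G$; conjugating back, so is $f$. In particular every element of $G$ is a product of at most two commutators, so $G'$ is $2$-uniformly perfect (indeed $G = G'$).

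\textbf{Step 2: uniform simplicity.} Fix $g \in G \setminus \{id\}$; since $g$ moves some point there is a nonempty open interval $J$ with $J\cdot g \cap J = \emptyset$. By Step 1 an arbitrary $f \in G' = G$ is a product $c_1 c_2$ of two commutators, each the identity outside a compact interval, so it suffices to bound how many conjugates of $g$ and $g^{-1}$ build one such $c_i$. Using proximality, conjugate $c_i$ to be supported in $J$, and then invoke the ``two-conjugates'' fragmentation: an element supported in an interval displaced off itself by $g$ is a product of boundedly many conjugates of $g^{\pm1}$ — schematically, split it into a ``$g$-ward'' piece and a ``$g^{-1}$-ward'' piece and absorb each into a conjugate of $g^{\pm1}$ by running the compression trick of Step 1 inside $J \cup J\cdot g \cup J\cdot g^2 \cup \cdots$, again finitized. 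Tracking the constants (three conjugates per $c_i$) yields the bound $6$ for $f$.

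\textbf{Main obstacle.} The commutator and conjugate manipulations above are routine for boundedly supported homeomorphism groups; the substantive point is that we may \emph{not} pass to a larger group — $b$ and all intermediate elements must stay in $G$ — so everything must be done with finitely many elements of $G$, and the dynamical input has to be strong enough to supply the data $(W,t)$ of Step 1, the displacement $J\cdot g\cap J=\emptyset$ of Step 2, and the analogous compressing data inside $J$, all compatibly. I expect the real work to be: (i) extracting these configurations purely from the ``push bounded open sets into arbitrary open sets'' hypothesis; (ii) the finite telescoping bookkeeping that pins the commutator width at $2$ and the conjugacy width at $6$; and (iii) the preliminary reduction when $f$ has disconnected support, which requires fragmenting $f$ into finitely many singly-supported pieces before applying proximality. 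One should also verify that it is proximality in the stated sense (transitivity on bounded open sets), rather than mere topological transitivity, that these steps consume.
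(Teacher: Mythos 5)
The paper does not actually prove this statement: it is quoted as a special case of Theorem~1.1 of Gal--Gismatullin \cite{prox}, so your attempt has to stand on its own, and it contains a genuine gap --- in fact Step~1 asserts something false. You claim that a compressible element of $G$ is a product of at most two commutators of elements of $G$ via a ``finite telescoping variant'' of the infinite swindle, and you conclude that $G=G'$. But the hypotheses ``boundedly supported and proximal'' do not force $G$ to be perfect: this very paper applies the theorem to $G=F_\tau^c$ (Corollary~\ref{cor:Fus}), which is boundedly supported and proximal, while $(F_\tau^c)'=F_\tau'$ has index two in $F_\tau^c$ (Proposition~\ref{prop:index2F}, from \cite{burilloF}). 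Every element of $F_\tau^c$ is compressible in your sense (your own construction of the pair $(W,t)$ plus one conjugation applies verbatim), so your claim would make $F_\tau^c$ perfect, a contradiction; more bluntly, any product of commutators lies in $G'$, so no identity of the kind you propose can exist for $f\notin G'$. The failure is exactly at the point you flag as ``the only subtlety'': the infinite product $b$ is indispensable for $f=[b^{-1},t]$, and no finite truncation stays in $G$ \emph{and} closes the telescope. Concretely, with $b_N=\prod_{n=0}^{N}t^{-n}ft^{n}$ one gets $[b_N^{-1},t]=f\cdot t^{-(N+1)}f^{-1}t^{N+1}$, and the leftover conjugate of $f^{-1}$ represents the same class as $f^{-1}$ in $G/G'$, so the obstruction (the abelianization of $G$, which can be $\mathbf{Z}/2\mathbf{Z}$ as above) never disappears. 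This is precisely why the theorem is stated for $G'$ and not for $G$.

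Step~2 inherits the problem: it opens with ``an arbitrary $f\in G'=G$ is a product $c_1c_2$ of two commutators,'' but $2$-uniform perfection of $G'$ is part of what must be proven, and your only route to it is the broken Step~1. Your purely dynamical constructions are fine (extracting $(W,t)$ from proximality, and an interval $J$ with $J\cdot g\cap J=\emptyset$ from $g\neq id$), and the ``absorb a displaced element into conjugates of $g^{\pm1}$'' idea is indeed the engine of the real proof; but the argument in \cite{prox} starts from elements that are already commutators (or products of two commutators) of elements of $G$ and expresses \emph{those} as bounded products of conjugates of $g^{\pm1}$, using proximality to move the relevant supports into displaced position. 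That is genuinely different bookkeeping from fragmenting a single boundedly supported element, and it is where the constants $2$ and $6$ come from. To repair your write-up you would need to replace Step~1 by an argument that takes $f\in G'$ as input (never claiming perfection of $G$) and only then run the displacement estimates of Step~2.
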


\subsection{Stable commutator length}

For a comprehensive survey on the topic, we refer the reader to \cite[Chapter 2]{calegari}. The following is a direct consequence of the definition of stable commutator length as stated in the introduction.

\begin{lemma}
Let $G$ be a uniformly perfect group. Then $\scl$ vanishes everywhere on $G$.
\end{lemma}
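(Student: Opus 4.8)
The plan is to unwind the definitions. Since $G$ is uniformly perfect, there exists a fixed $n \in \mathbf{N}$ such that every element of $G$ can be written as a product of at most $n$ commutators of elements of $G$. In particular, every element of $G$ lies in the commutator subgroup $G'$, so $G = G'$. Consequently, for any $g \in G$ and any $k \geq 1$, the power $g^k$ automatically belongs to $G'$, so that $\scl(g)$ is given directly by the limit $\lim_{k\to\infty}\frac{\cl(g^k)}{k}$ with no need to pass to a further power.

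Now I would simply observe that $\cl(g^k) \leq n$ for every $k \geq 1$, again because $g^k \in G = G'$ is a product of at most $n$ commutators by uniform perfection. Dividing by $k$ and letting $k \to \infty$ gives $\scl(g) \leq \lim_{k\to\infty} \frac{n}{k} = 0$. Since $\scl$ is non-negative by definition, this forces $\scl(g) = 0$, and as $g \in G$ was arbitrary, $\scl$ vanishes everywhere on $G$. There is no real obstacle here: the only point worth stating carefully is that uniform perfection gives a \emph{uniform} bound on $\cl$ over all of $G$ (in particular over all powers of a fixed element), which is exactly what is needed to kill the limit — mere perfectness of $G$ would not suffice.
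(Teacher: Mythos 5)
Your proof is correct and is exactly the ``direct consequence of the definition'' that the paper invokes without writing out: uniform perfection gives a uniform bound $\cl(g^k)\leq n$ for all powers, so the limit defining $\scl(g)$ is $0$. Nothing further is needed, and your remark that mere perfectness would not suffice is a sensible point of emphasis.
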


The way we will compute $\scl$ is via quasimorphisms.

\begin{definition}
Let $\phi : G \to \mathbf{R}$. Its \emph{defect} is
$$D(\phi) := \sup\limits_{g, h \in G} \left|\phi(g) + \phi(h) - \phi(gh)\right|.$$
If the defect is finite, we say that $\phi$ is a \emph{quasimorphism}. If moreover $\phi(g^k) = k \phi(g)$ for every $g \in G$ and every $k \in \mathbf{Z}$, we say that $\phi$ is \emph{homogeneous}.
\end{definition}

Every quasimorphism is at a bounded distance from a unique homogeneous quasimorphism. The following example is the fundamental one for our purposes:

\begin{example}[Poincar\'e, see {\cite[11.1]{ds}}]
\label{ex:rot}

Let $g \in \overline{\Homeo^+(\mathbf{R}/\mathbf{Z})}$ be an orientation-preserving homeomorphism of the line that commutes with integer translations. Define the \emph{rotation number} of $g$ to be
$$\rot(g) := \lim\limits_{n \to \infty} \frac{0 \cdot g^n}{n}.$$
Then $\rot : \overline{\Homeo^+(\mathbf{R}/\mathbf{Z})} \to \mathbf{R}$ is a homogeneous quasimorphism of defect $1$ \cite[Proposition 2.92]{calegari}.

It follows that for every subgroup $G \leq \overline{\Homeo^+(\mathbf{R}/\mathbf{Z})}$, the restriction of $\rot$ to $G$ is a homogeneous quasimorphism of defect at most $1$. Note however that the defect of $\rot|_G$ need not be equal to $1$: for example $\rot|_{\mathbf{Z}}$ is a homomorphism, so its defect is $0$.
\end{example}

The connection between $\scl$ and quasimorphisms is provided by the following fundamental result (see \cite[Theorem 2.70]{calegari}):

\begin{theorem}[Bavard Duality \cite{bavard}]
\label{thm:bavard}

Let $G$ be a group and let $g \in G'$. Then:
$$\scl(g) = \frac{1}{2} \sup \frac{|\phi(g)|}{D(\phi)};$$
where the supremum runs over all homogeneous quasimorphisms of positive defect.
\end{theorem}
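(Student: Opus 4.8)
The plan is to prove the two inequalities $\scl(g)\ge\tfrac12\sup_\phi\tfrac{|\phi(g)|}{D(\phi)}$ and $\scl(g)\le\tfrac12\sup_\phi\tfrac{|\phi(g)|}{D(\phi)}$ separately, where throughout the supremum is over homogeneous quasimorphisms $\phi$ of positive defect. The first, easy inequality is a direct computation with quasimorphisms. Homogeneity forces $\phi$ to be conjugation-invariant and odd, from which $|\phi([a,b])|\le D(\phi)$ for all $a,b\in G$ (write $[a,b]=(a^{-1}b^{-1}a)\cdot b$ and apply the defect inequality, using $\phi(a^{-1}b^{-1}a)=-\phi(b)$). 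Hence if $g^m=\prod_{i=1}^{n}[a_i,b_i]$ with $n=\cl(g^m)$, telescoping the defect inequality and using $|\phi([a_i,b_i])|\le D(\phi)$ gives $|\phi(g^m)|\le(2n-1)D(\phi)\le 2nD(\phi)$; since $\phi(g^m)=m\phi(g)$ by homogeneity, this rearranges to $\tfrac{|\phi(g)|}{2D(\phi)}\le\tfrac{\cl(g^m)}{m}$, and letting $m\to\infty$ gives $\tfrac{|\phi(g)|}{2D(\phi)}\le\scl(g)$. Taking the supremum over $\phi$ yields the easy inequality.

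For the reverse inequality I would realize $\scl$ as a seminorm on a real vector space and extract a quasimorphism via Hahn--Banach. Work in the normalized real bar complex $C_\bullet(G;\mathbf R)$, so $C_1(G;\mathbf R)$ is the free $\mathbf R$-vector space on $G$ with $\partial(g,h)=(g)+(h)-(gh)$, and let $B_1(G;\mathbf R)\subseteq C_1(G;\mathbf R)$ be the subspace of $1$-boundaries (equivalently, the chains mapping to $0$ in $\HH_1(G;\mathbf R)=G^{\mathrm{ab}}\otimes\mathbf R$); note that $(g)\in B_1$ whenever $g\in G'$, and that $(g^n)-n(g)$ and $(hgh^{-1})-(g)$ always lie in $B_1$. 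Using the surface interpretation of $\scl$ (\cite[Proposition 2.10]{calegari}) one shows that $\scl$ descends to a well-defined seminorm on the homogenized space $B_1^H(G;\mathbf R):=B_1(G;\mathbf R)/\langle(g^n)-n(g),\,(hgh^{-1})-(g)\rangle$, that it agrees with the $\scl$ of the introduction on each class $[(g)]$ with $g\in G'$, and that $\scl(\partial(g,h))\le\tfrac12$ for every $2$-simplex. Granting this, the class $[(g)]$ has seminorm $s:=\scl(g)$, so by Hahn--Banach there is a linear functional $\Phi$ on $B_1^H(G;\mathbf R)$ with $|\Phi(\,\cdot\,)|\le\scl(\,\cdot\,)$ and $\Phi([(g)])=s$. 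Pull $\Phi$ back to $B_1(G;\mathbf R)$, extend it arbitrarily to a linear functional $\tilde\Phi$ on $C_1(G;\mathbf R)$, and set $\phi(h):=\tilde\Phi((h))$. Since $(g^n)-n(g)$ and $(hgh^{-1})-(g)$ lie in $B_1$ and vanish in $B_1^H(G;\mathbf R)$, the function $\phi$ is automatically homogeneous and conjugation-invariant; and since $\phi(g)+\phi(h)-\phi(gh)=\tilde\Phi(\partial(g,h))=\Phi(\partial(g,h))$ has absolute value at most $\scl(\partial(g,h))\le\tfrac12$, $\phi$ is a homogeneous quasimorphism with $D(\phi)\le\tfrac12$. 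Finally $\phi(g)=\Phi([(g)])=s$, so $\tfrac{|\phi(g)|}{2D(\phi)}\ge s=\scl(g)$, which is the reverse inequality. (If $\scl(g)=0$ the right-hand side also vanishes, by the easy direction, so we may assume $\scl(g)>0$, which in particular guarantees $D(\phi)>0$ above.)

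The main obstacle is the claim just invoked: that $\scl$ genuinely descends to a seminorm on $B_1^H(G;\mathbf R)$ agreeing with the commutator-length definition on $G'$, together with the estimate $\scl(\partial(g,h))\le\tfrac12$. The first point rests on identifying the combinatorial $\scl$ with the topological ($\ell^1$-filling) definition via admissible surfaces, and then checking subadditivity, invariance under the homogenization relations, and finiteness of the seminorm; the estimate $\scl(\partial(g,h))\le\tfrac12$ is the concrete observation that the three loops $\gamma_g,\gamma_h,\bar\gamma_{gh}$ bound a pair of pants (using $g\cdot h\cdot(gh)^{-1}=e$), realizing the ratio $\tfrac12$ in the definition of $\scl$. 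I expect setting up $\scl$ correctly as a seminorm on $B_1^H(G;\mathbf R)$ to be the most technical part, everything else being formal once that is in place; in bounded-cohomology language this package is essentially the isomorphism $\EH^2_b(G;\mathbf R)\cong Q(G)/\HH^1(G;\mathbf R)$ together with the duality between the $\scl$-seminorm on $B_1^H(G;\mathbf R)$ and the Gromov seminorm.
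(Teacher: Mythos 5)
This statement is not proved in the paper at all: it is imported as a classical theorem, with a pointer to Bavard's original article and to \cite[Theorem 2.70]{calegari}. Your proposal is, in effect, a reconstruction of that standard proof rather than an alternative to anything in the paper. The easy direction is complete and correct: conjugation-invariance and oddness of homogeneous quasimorphisms give $|\phi([a,b])|\le D(\phi)$, the telescoped estimate $|\phi(g^m)|\le (2\cl(g^m)-1)D(\phi)$ follows, and homogeneity plus the limit defining $\scl$ yields $\sup_\phi |\phi(g)|/2D(\phi)\le\scl(g)$. The hard direction is exactly Calegari's Hahn--Banach argument for (generalized) Bavard duality: extend $\scl$ to a finite seminorm on $B_1^H(G;\mathbf{R})$, extract a functional $\Phi$ with $\Phi([(g)])=\scl(g)$ and $|\Phi|\le\scl$, and read off a homogeneous quasimorphism $\phi$ with $\phi(g)=\scl(g)$ and $D(\phi)\le\tfrac12$ via the pair-of-pants bound $\scl(\partial(a,b))\le\tfrac12$; your handling of the degenerate cases ($\scl(g)=0$, and $D(\phi)=0$ being impossible when $\phi(g)\neq 0$ and $g\in G'$) is also right. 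The one caveat is that the step you ``grant'' --- that $\scl$ descends to a well-defined, finite seminorm on $B_1^H$ agreeing with the algebraic definition on classes $[(g)]$ for $g\in G'$ --- is where most of the actual work lives (it is the content of \cite[Sections 2.5--2.6]{calegari}, including Proposition 2.10 identifying the algebraic and surface definitions); as a self-contained proof your text is therefore an outline resting on that cited machinery, but the route itself is sound and is the standard one, so there is no gap in the logic beyond this acknowledged dependence.
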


\begin{corollary}[Bavard \cite{bavard}]
\label{cor:bavard}

Let $G$ be a group. Then $\scl$ vanishes everywhere on $G'$ if and only if every homogeneous quasimorphism on $G$ is a homomorphism.
\end{corollary}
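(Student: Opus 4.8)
This is an immediate consequence of Bavard Duality (Theorem \ref{thm:bavard}), once we recall a standard fact about homogeneous quasimorphisms. I would organize the argument in three steps.

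First, the ``easy'' direction: suppose every homogeneous quasimorphism on $G$ is a homomorphism. Then $G$ admits no homogeneous quasimorphism of positive defect at all, since a homomorphism to $\mathbf{R}$ has defect $0$. Hence for any $g \in G'$ the supremum in Theorem \ref{thm:bavard} is taken over the empty set and equals $0$ (equivalently, one observes directly that any homomorphism $G \to \mathbf{R}$ vanishes on $G'$, so the numerator is always $0$). Thus $\scl(g) = 0$.

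Second, for the converse I would argue by contraposition: assume some homogeneous quasimorphism $\phi \colon G \to \mathbf{R}$ is not a homomorphism. Since a homogeneous quasimorphism with $D(\phi) = 0$ is automatically a homomorphism, we must have $D(\phi) > 0$. The key point is then the classical lemma that \emph{a homogeneous quasimorphism which vanishes on $G'$ is a homomorphism}. Granting this, our $\phi$ cannot vanish on all of $G'$, so there is $g \in G'$ with $\phi(g) \neq 0$, and Theorem \ref{thm:bavard} gives $\scl(g) \geq \tfrac{1}{2}\,|\phi(g)|/D(\phi) > 0$, so $\scl$ does not vanish everywhere on $G'$.

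Third, I would prove the cited lemma. Let $\phi$ be homogeneous and vanish on $G'$, and fix $g, h \in G$. For every $n$, the elements $(gh)^n$ and $g^n h^n$ have the same image in the abelianization of $G$, so $(gh)^n (g^n h^n)^{-1} \in G'$ and hence $\phi\big((gh)^n (g^n h^n)^{-1}\big) = 0$. Writing $(gh)^n = \big[(gh)^n (g^n h^n)^{-1}\big]\cdot (g^n h^n)$ and applying the defect inequality twice, together with $\phi(g^n h^n) = \phi(g^n)+\phi(h^n) + O(D(\phi))$, yields $\big|\phi((gh)^n) - \phi(g^n) - \phi(h^n)\big| \leq 2D(\phi)$. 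By homogeneity this reads $\big|n\phi(gh) - n\phi(g) - n\phi(h)\big| \leq 2D(\phi)$; dividing by $n$ and letting $n \to \infty$ forces $\phi(gh) = \phi(g) + \phi(h)$, so $\phi$ is a homomorphism.

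The only genuinely substantive step is this last lemma; everything else is a direct unwinding of Bavard Duality. (In the writeup it may in fact be cleanest to simply cite the lemma from \cite[Section 2.2]{calegari} and present the corollary as a one-line deduction.)
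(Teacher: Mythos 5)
Your proposal is correct and takes the same route the paper intends: the paper states this as an immediate consequence of Bavard Duality (Theorem \ref{thm:bavard}) and cites \cite{bavard} without giving a proof, and your argument is exactly the standard deduction, including the classical lemma that a homogeneous quasimorphism vanishing on $G'$ is a homomorphism. No gaps.
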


In particular, if $G$ is such that the space of homogeneous quasimorphisms is one-dimensional and spanned by $\phi$, then the computation of $\scl$ reduces to the evaluation of $\phi$ and its defect. This is the approach taken in \cite{zhuang} to produce examples of groups with transcendental $\scl$. The general statement for groups acting on the circle is the following:

\begin{theorem}[Zhuang \cite{zhuang}]
\label{thm:zhuang}

Let $G \leq \Homeo^+(\mathbf{R}/\mathbf{Z})$, let $\overline{G} \leq \overline{\Homeo^+(\mathbf{R}/\mathbf{Z})}$ be its total lift to the real line, and let $D := D(\rot|_{\overline{G}})$. Suppose that $\scl$ vanishes everywhere on $G$. Then either $D = 0$, or
$$\scl(g) = \frac{|\rot(g)|}{2D}$$
for every $g \in \overline{G}'$.
\end{theorem}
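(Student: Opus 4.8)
Here is my plan of attack.

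The strategy is to use Bavard Duality (Theorem~\ref{thm:bavard}) together with a complete understanding of the space $Q(\overline{G})$ of homogeneous quasimorphisms on $\overline{G}$. The key claim I would isolate first is that, under the hypothesis that $\scl$ vanishes everywhere on $G$, the space $Q(\overline{G})$ is at most one-dimensional, spanned by $\rot|_{\overline{G}}$. To see this, recall the central extension $1 \to \mathbf{Z} \to \overline{G} \to G \to 1$ coming from restricting $\eta$. A homogeneous quasimorphism $\psi$ on $\overline{G}$ restricts to a homomorphism on the central subgroup $\mathbf{Z}$ (homogeneous quasimorphisms are conjugation-invariant and restrict to homomorphisms on amenable, in particular abelian, subgroups). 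Subtracting a suitable multiple of $\rot|_{\overline{G}}$ (which is nonzero on the generator of $\mathbf{Z}$, since $\rot$ of the unit translation is $1$), we may assume $\psi$ vanishes on $\mathbf{Z}$; then $\psi$ descends to a homogeneous quasimorphism on $G$. But $\scl$ vanishing on $G$ forces, via Corollary~\ref{cor:bavard}, every homogeneous quasimorphism on $G$ to be a homomorphism; and if $G' = G$ (which one should check, or build into the argument — in our application $G = T_\tau$ and $\scl$ vanishing comes from uniform perfectness of a finite-index subgroup), that homomorphism is trivial. Hence $\psi$ is a scalar multiple of $\rot|_{\overline{G}}$, as claimed.

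With this in hand, the case $D = 0$ is immediate: then $\rot|_{\overline{G}}$ has defect zero, so it is a homomorphism, and by the previous paragraph it is the \emph{only} homogeneous quasimorphism up to scaling, so every homogeneous quasimorphism on $\overline{G}$ is a homomorphism; Bavard Duality then gives $\scl(g) = 0$ for all $g \in \overline{G}'$ — consistent with (but not covered by) the displayed formula, which is why the statement says ``either $D = 0$, or''. In the case $D > 0$, take any $g \in \overline{G}'$. Bavard Duality says $\scl(g) = \tfrac{1}{2}\sup_\phi \frac{|\phi(g)|}{D(\phi)}$ over homogeneous quasimorphisms of positive defect. Since every such $\phi$ is of the form $c \cdot \rot|_{\overline{G}}$ for some $c \neq 0$, and both $|\phi(g)|$ and $D(\phi)$ scale linearly in $|c|$, the ratio is constant equal to $\frac{|\rot(g)|}{D}$. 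Therefore $\scl(g) = \frac{|\rot(g)|}{2D}$.

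The main obstacle is the dimension count $\dim Q(\overline{G}) \leq 1$, and more precisely making the descent argument airtight: one must be careful that a homogeneous quasimorphism vanishing on the central $\mathbf{Z}$ genuinely factors through $G$ (this uses that $\mathbf{Z}$ is central, so that the induced map on $G$ is well-defined and still a homogeneous quasimorphism), and one must know that the only homomorphism $G \to \mathbf{R}$ is zero — equivalently $H^1(G;\mathbf{R}) = 0$, which follows once $G$ is perfect. In the intended application this is supplied by $T_\tau$ having a finite-index perfect (indeed uniformly perfect) subgroup, forcing $H^1(T_\tau;\mathbf{R}) = 0$ and $\scl \equiv 0$ on $T_\tau$; in the abstract statement of Zhuang's theorem the hypothesis ``$\scl$ vanishes everywhere on $G$'' is doing this work, so I would spell out how vanishing of $\scl$ on $G$ plus Corollary~\ref{cor:bavard} yields the needed control. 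Everything else — conjugation-invariance of homogeneous quasimorphisms, their restriction to abelian subgroups being homomorphisms, the linear scaling in Bavard's formula — is standard and routine.
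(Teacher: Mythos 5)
Your proposal follows essentially the same route as the paper: restrict a homogeneous quasimorphism to the central $\mathbf{Z}$, subtract a multiple of $\rot$ so that it vanishes there, descend to $G$, invoke Corollary \ref{cor:bavard} to make the descended quasimorphism a homomorphism, conclude that the space of homogeneous quasimorphisms on $\overline{G}$ is spanned by $\rot$, and finish with Bavard Duality; the scaling argument in the case $D>0$ and the remark about $D=0$ are exactly as intended. The one step you explicitly leave open --- why the resulting homomorphism $G \to \mathbf{R}$ must vanish --- does not require $G$ to be perfect: with the paper's convention that $\scl(g)=\infty$ when no power of $g$ lies in $G'$, the hypothesis that $\scl$ vanishes (in particular is finite) on every element of $G$ forces every element to have a power in $G'$, so the abelianization of $G$ is torsion and $\Hom(G,\mathbf{R})=0$. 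Filling in that one line makes your argument complete and identical in substance to the paper's proof.
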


Although this result is implicit in \cite{zhuang}, it is never stated as a theorem and is only mentioned briefly and informally. For completeness, we provide a proof below.

\begin{proof}
We start by showing that no non-trivial homogeneous quasimorphism vanishes on the unit translation $x \mapsto (x + 1)$: the proof is similar to the analogous statement for $\overline{\Homeo^+(\mathbf{R}/\mathbf{Z})}$ \cite{bg}.
Indeed, suppose that $\phi : \overline{G} \to \mathbf{R}$ is such a quasimorphism. Homogeneity implies that $\phi$ vanishes on the whole central subgroup $\mathbf{Z} \leq \overline{G}$. Now every homogeneous quasimorphism on an abelian group is a homomorphism \cite[Proposition 2.65]{calegari}, so if $g \in \overline{G}$ and $z \in \mathbf{Z}$, then $\phi(gz) = \phi(g) + \phi(z) = \phi(g)$. It follows from this that $\phi$ induces a well-defined quasimorphism $\psi : G \to \mathbf{R}$. By Corollary \ref{cor:bavard}, together with the vanishing assumption, every homogeneous quasimorphism on $G$ is a homomorphism. Moreover, since every element of $G$ has finite $\scl$, the abelianization of $G$ is torsion, so $G$ admits no non-trivial homomorphism to $\mathbf{R}$ either. So $\psi$ must vanish everywhere on $G$, and therefore $\phi$ vanishes everywhere on $\overline{G}$. \\

This also implies that $\rot$ is the unique homogeneous quasimorphism on $\overline{G}$ that sends the unit translation to $1$: if $\phi$ is any other such quasimorphism, then $(\rot - \phi)$ vanishes on the unit translation and so it is trivial. These two facts together imply that the space of homogeneous quasimorphisms of $\overline{G}$ is one-dimensional and spanned by $\rot$. The formula then follows from Bavard Duality (Theorem \ref{thm:bavard}).
\end{proof}

\begin{remark}
A more conceptual proof using bounded cohomology can be found in \cite[Theorem 5.16]{calegari}. There too the general statement is only implicit, but the proof carries over to this setting.
\end{remark}

\subsection{Finiteness properties}

A group $G$ is said to be \emph{of type $F_n$} for an integer $n \geq 1$ if it admits a classifying space with compact $n$-skeleton, and \emph{of type $F_\infty$} if it is of type $F_n$ for all $n \geq 1$. Being of type $F_1$ is equivalent to being finitely generated, and being of type $F_2$ is equivalent to being finitely presented.

%It is \emph{of type $FP_n$} if the trivial $\mathbf{Z}G$ module $\mathbf{Z}$ admits a projective resolution which is finitely generated in dimensions up to $n$, and \emph{of type $FP_\infty$} if it is of type $FP_n$ for all $n \geq 1$. Groups of type $F_n$ are also of type $FP_n$, and so groups of type $F_\infty$ are also of type $FP_\infty$. The converse is not true: there exist groups which are of type $FP_\infty$ but not finitely presented. However, for finitely presented groups, the two properties are equivalent, so we will only talk about $F_\infty$ groups from now on.

These properties are preserved by extensions, so in particular a central extension by $\mathbf{Z}$ of a group of type $F_\infty$ is still of type $F_\infty$.

%We will need to transfer finiteness properties from groups to finite-index subgroups, which is possible thanks to the following:

%\begin{theorem}[Alonso \cite{alonso}]
%\label{thm:qi}

%Being of type $F_n$ is a quasi-isometry invariant. In particular a finite-index subgroup of a group of type $F_\infty$ is of type $F_\infty$.
%\end{theorem}

We will use the following special case of Brown's finiteness criterion to show that the groups we are interested in are of type $F_\infty$.

\begin{proposition}[Brown {\cite[Proposition 1.1]{brown_fp}}]
\label{prop:brown}
Let $X$ be a simplicial complex, and let $G$ be a group acting on $X$ by simplicial automorphisms. Suppose that
\begin{enumerate}
    \item $X$ is contractible;
    \item $G$ acts cocompactly on simplices of every given dimension;
    \item The stabilizer of a simplex is of type $F_\infty$.
\end{enumerate}
Then $G$ is of type $F_\infty$.
\end{proposition}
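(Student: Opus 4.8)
The plan is to produce an explicit $K(G,1)$ with finite skeleta by combining the quotient $X/G$ with classifying spaces of the simplex stabilizers, via the Borel construction. After a barycentric subdivision we may assume the $G$-action on $X$ has no inversions, so that $X$ is a $G$-CW-complex, the quotient $X/G$ is again a simplicial complex, and hypothesis (2) says exactly that $X/G$ has finitely many simplices in each dimension. Fix a free contractible $G$-CW-complex $E$ (the standard $EG$ will do). Since $X$ is contractible by (1), the diagonal $G$-action on $X \times E$ is free with contractible total space, so $Z := (X \times E)/G$ is a model for $BG$. It therefore suffices to show that $Z$ has the homotopy type of a CW-complex with finitely many cells in each dimension, as this exhibits $G$ as type $F_n$ for all $n$.

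To analyze $Z$, filter it by skeleta of $X$: with $X^{(k)}$ the $k$-skeleton and $Z_k := (X^{(k)} \times E)/G$, one has $Z = \bigcup_{k \ge 0} Z_k$, and $Z_k$ is obtained from $Z_{k-1}$ by attaching, for each of the finitely many $G$-orbits of $k$-simplices $\sigma$, a copy of $D^k \times (E/G_\sigma)$ along $S^{k-1} \times (E/G_\sigma)$. Here $G_\sigma \le G$ is the stabilizer, and $E$ — being a free contractible $G_\sigma$-CW-complex by restriction — makes $E/G_\sigma$ a model for $BG_\sigma$. By hypothesis (3) the group $G_\sigma$ is of type $F_\infty$, so $BG_\sigma$ is homotopy equivalent to a CW-complex with finite skeleta; replacing $E/G_\sigma$ by such a model, the relative cells of the pair $(Z_k, Z_{k-1})$ coming from $\sigma$ are products of the open $k$-cell of $X/G$ with the cells of $BG_\sigma$, hence have dimension $\ge k$, with only finitely many in each dimension.

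Counting dimension by dimension now finishes the argument: an $m$-cell of the resulting CW-structure on $Z$ is a relative cell of $(Z_k, Z_{k-1})$ for a unique $k \le m$, and for each such $k$ there are finitely many orbits of $k$-simplices and finitely many $(m-k)$-cells in each $BG_\sigma$. Hence $Z$ has finitely many cells in dimension $m$ for every $m$, so it is a $K(G,1)$ with finite $n$-skeleton for all $n$, i.e. $G$ is of type $F_\infty$.

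The step requiring genuine care — the \emph{main obstacle} — is the second one: turning the informal description of $Z_k$ as obtained from $Z_{k-1}$ by attaching copies of $D^k \times (E/G_\sigma)$ into an honest statement about CW-complexes, performed coherently across all $k$, and replacing each $E/G_\sigma$ by a finite-type model for $BG_\sigma$ without destroying the gluing data. This is exactly the bookkeeping managed by the standard theory of $G$-CW-complexes, using that $(G/G_\sigma \times E)/G \cong E/G_\sigma$ canonically. One can also sidestep the topology: induce finite-type $\mathbf{Z}G_\sigma$-resolutions of $\mathbf{Z}$ up to $\mathbf{Z}G$, splice them against the cellular chain complex of $X$ (a $\mathbf{Z}G$-resolution of $\mathbf{Z}$ since $X$ is contractible) in a first-quadrant double complex, and pass to the total complex, obtaining that $G$ is of type $FP_\infty$; since the same Borel-construction argument in dimensions $\le 2$ shows $G$ is finitely presented, this upgrades to type $F_\infty$.
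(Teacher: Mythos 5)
The paper itself does not prove this proposition --- it is quoted from Brown's work --- so there is no internal argument to compare with; your Borel-construction strategy is indeed the standard homotopical route to Brown's criterion, but there is a genuine gap in your opening reduction. You pass to the barycentric subdivision $X'$ to remove inversions and then assert that hypothesis (2) ``says exactly that $X/G$ has finitely many simplices in each dimension.'' After subdivision this is false unless $X$ is finite dimensional: the vertices of $X'$ are the barycenters of \emph{all} simplices of $X$, so the $G$-orbits of vertices of $X'$ are in bijection with the $G$-orbits of simplices of $X$ of every dimension, and these are infinite in number as soon as $X$ has simplices in infinitely many dimensions. That is exactly the situation in the application in this paper (the complex used to prove that $T_\tau$ is of type $F_\infty$ has simplices of every dimension, and its setwise simplex stabilizers genuinely fail to fix simplices pointwise, e.g.\ because of rotations), so your reduction destroys hypothesis (2) precisely where the proposition is needed. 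Your subsequent filtration and cell count require both ``no inversions'' and ``finitely many orbits per dimension,'' and a single global subdivision cannot secure both. A secondary, repairable omission in the same step: simplex stabilizers of $X'$ are flag stabilizers $\operatorname{Stab}(\sigma_0)\cap\cdots\cap\operatorname{Stab}(\sigma_k)$, and hypothesis (3) transfers to them only after observing that they contain the pointwise stabilizer of $\sigma_k$, which has finite index in $\operatorname{Stab}(\sigma_k)$, so that they are commensurable with a group of type $F_\infty$.

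The gap can be closed, but it needs an extra idea beyond what you wrote. The standard fix is to prove type $F_n$ for each $n$ separately: replace $X$ by its $n$-skeleton, which is $(n-1)$-connected (since $X$ is contractible) and finite dimensional, so that its barycentric subdivision \emph{does} have finitely many simplex orbits in each dimension and flag stabilizers of type $F_\infty$; then run your Borel-construction filtration with the weaker input that the fibre is $(n-1)$-connected, so that $(X^{(n)}\times E)/G \to BG$ is $n$-connected and a finite-type model through dimension $n$ still yields a $K(G,1)$ with finite $n$-skeleton. Alternatively, your closing algebraic aside (inducing finite-type resolutions from the $G_\sigma$, splicing against the cellular chains of $X$ to get $FP_\infty$, and extracting finite presentability from the $2$-skeleton, where subdivision is harmless because that skeleton is finite dimensional) is essentially Brown's own route and sidesteps the problem --- but as written you offer it as an optional remark rather than as the repair of the failing step. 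The remainder of your homotopical argument --- attaching $D^k\times(E/G_\sigma)$ along $S^{k-1}\times(E/G_\sigma)$, replacing $E/G_\sigma$ by finite-type classifying spaces up to homotopy, and the dimension count --- is correct modulo the bookkeeping you yourself flag.
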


We refer the reader to \cite{brown_book, brown_fp} and \cite{alonso} for more details.

\section{The golden ratio Thompson group}

Let $\tau := \frac{\sqrt{5} - 1}{2}$ be the small golden ratio. In \cite{cleary}, the author defines the \emph{golden ratio Thompson group} $F_\tau$ as the group of orientation preserving piecewise linear homeomorphisms $g$ of $[0, 1]$ such that the following two properties are satisfied:
\begin{enumerate}
    \item $g$ has finitely many breakpoints, all of which lie in $\mathbf{Z}[\tau]$;
    \item The slope of $g$, whenever it is defined, is a power of $\tau$.
\end{enumerate}

Note that it automatically follows that $F_\tau$ preserves $\mathbf{Z}[\tau] \cap [0, 1]$. This is the analogous definition as Thompson's group $F$, where $\tau$ plays the role of $\frac{1}{2}$. 
This group was studied combinatorially via tree diagrams in \cite{burilloF}. The group action on $\mathbf{R/Z}$ corresponding to the tree diagrams defined in \cite{burilloF} coincides with the above action.
\\

We consider the following ``circle" analogue of the group $F_\tau$.

\begin{definition}\label{Definition: main}
Define the \emph{golden ratio Thompson group} $T_\tau$ as the group of orientation preserving piecewise linear homeomorphisms $g$ of $\mathbf{S}^1 = \mathbf{R}/\mathbf{Z}$ such that the following three properties are satisfied:

\begin{enumerate}
    \item $g$ has finitely many breakpoints, all of which lie in in $\mathbf{Z}[\tau]/\mathbf{Z}$;
    \item The slope of $g$, whenever it is defined, is a power of $\tau$;
    \item $g$ preserves $\mathbf{Z}[\tau]/\mathbf{Z}$.
\end{enumerate}
\end{definition}

Note that, unlike with $F_\tau$, it is necessary here to impose the third condition, since every rotation satisfies the first two. Again, this is the analogous definition as Thompson's group $T$, where $\tau$ plays the role of $\frac{1}{2}$. Crucially, $T_\tau$ contains all rotations with angles in $\mathbf{Z}[\tau]$, and the stabilizer of a point $x \in \mathbf{Z}[\tau]/\mathbf{Z}$ is naturally isomorphic to $F_\tau$. \\

The group $T_{\tau}$ has been studied in \cite{burilloT} by combinatorial means.
We now clarify that the definition of $T_{\tau}$ provided in \cite{burilloT}, by means of combinatorial tree diagrams, defines the same group as Definition \ref{Definition: main}. Note that the group action on $\mathbf{R/Z}$ defined as $T_{\tau}$ in page $2$ of \cite{burilloT} is incorrect, since the third condition of our Definition \ref{Definition: main} is omitted. However, the combinatorial model described by tree diagrams in \cite{burilloT} is correct and it corresponds to our definition above.

\begin{lemma}\label{lemma: same group}
The group action of $T_{\tau}$ on $\mathbf{R/Z}$ in Definition \ref{Definition: main} and the group action on $\mathbf{R/Z}$ corresponding to the group $T_{\tau}$ defined by combinatorial tree diagrams in Section $2$ of \cite{burilloT} represent the same group action.
\end{lemma}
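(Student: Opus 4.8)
The statement compares two subgroups of $\Homeo^+(\mathbf{R}/\mathbf{Z})$: the group $T_\tau$ of Definition~\ref{Definition: main}, and the group $T_\tau^{\mathrm{comb}}$ of homeomorphisms of $\mathbf{S}^1 = \mathbf{R}/\mathbf{Z}$ realized by the tree-pair diagrams of \cite{burilloT}. The plan is to prove the two inclusions $T_\tau^{\mathrm{comb}} \subseteq T_\tau$ and $T_\tau \subseteq T_\tau^{\mathrm{comb}}$ separately. For the first, recall the combinatorial picture: a finite rooted binary tree each of whose internal vertices carries one of two labels --- a ``left caret'' subdividing an interval $[a,b]$ of length $\ell$ into the two consecutive intervals of lengths $\tau\ell$ and $\tau^2\ell$, and a ``right caret'' subdividing it into those of lengths $\tau^2\ell$ and $\tau\ell$ --- determines, starting from $[0,1]$, a \emph{$\tau$-adic subdivision} of $\mathbf{S}^1$ into finitely many arcs; an element of $T_\tau^{\mathrm{comb}}$ is given by a pair of such subdivisions with the same number $n$ of arcs together with a cyclic shift $k \in \mathbf{Z}/n\mathbf{Z}$, acting by sending the $i$-th arc of the first subdivision affinely onto the $(i+k)$-th arc of the second. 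An induction on tree depth, using $\tau + \tau^2 = 1$ and $\tau^{-1} = 1 + \tau \in \mathbf{Z}[\tau]$, shows that every arc of a $\tau$-adic subdivision has endpoints in $\mathbf{Z}[\tau]/\mathbf{Z}$ and length a power of $\tau$. Hence each element of $T_\tau^{\mathrm{comb}}$ has finitely many breakpoints, all in $\mathbf{Z}[\tau]/\mathbf{Z}$; on each arc it is an affine map $x \mapsto \tau^m x + c$ with $m \in \mathbf{Z}$ and $c \in \mathbf{Z}[\tau]$ (the slope being a ratio of arc lengths, the intercept because a $\mathbf{Z}[\tau]$-point is sent to a $\mathbf{Z}[\tau]$-point); and since $\mathbf{Z}[\tau]$ is a ring, $\mathbf{Z}[\tau]/\mathbf{Z}$ is preserved. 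So conditions (1)--(3) of Definition~\ref{Definition: main} hold, giving $T_\tau^{\mathrm{comb}} \subseteq T_\tau$.

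For the reverse inclusion the quickest route is to reduce to the interval case, leaning on the analogous statement for $F_\tau$ that is already recalled in the text. The stabilizer in $T_\tau$ of the point $0 \in \mathbf{S}^1$ consists of the PL homeomorphisms of $[0,1]$ fixing the endpoints, with finitely many breakpoints in $\mathbf{Z}[\tau] \cap [0,1]$ and slopes powers of $\tau$; for such maps the condition that $\mathbf{Z}[\tau]$ be preserved is automatic (follow the breakpoints from $0$), so this stabilizer is exactly Cleary's group $F_\tau$, which by \cite{burilloF} coincides with the combinatorial $F_\tau$ and sits inside $T_\tau^{\mathrm{comb}}$ as the subgroup of diagrams with trivial cyclic shift. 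Moreover, every rotation by an element of $\mathbf{Z}[\tau]/\mathbf{Z}$ lies in $T_\tau^{\mathrm{comb}}$: the group of such rotations is cyclic, generated by the rotation $x \mapsto x + \tau$, which is represented by the diagram consisting of the ``right-caret'' subdivision $\{[0,\tau^2],[\tau^2,1]\}$, the ``left-caret'' subdivision $\{[0,\tau],[\tau,1]\}$, and the shift $k = 1$ (each of the two arcs having the correct image with slope $1$). Now, given any $g \in T_\tau$, the point $0 \cdot g$ lies in $\mathbf{Z}[\tau]/\mathbf{Z}$, so composing $g$ with the rotation by $-(0 \cdot g)$ produces an element fixing $0$; thus $g$ is a product of an element of the stabilizer of $0$ and a $\tau$-adic rotation. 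Since $T_\tau^{\mathrm{comb}}$ is a group, $g \in T_\tau^{\mathrm{comb}}$, completing the proof.

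I expect the only real content --- and the place a fully self-contained argument would have to do work --- to lie in the combinatorics of $\tau$-adic subdivisions that already underlies \cite{burilloF} and \cite{burilloT}. A self-contained proof of the reverse inclusion would instead go through a refinement lemma: every finite subset of $\mathbf{Z}[\tau] \cap [0,1)$ lies in the endpoint set of some $\tau$-adic subdivision (``separation''); any two $\tau$-adic subdivisions have a common $\tau$-adic refinement (``confluence''); and one should identify exactly which arcs with $\mathbf{Z}[\tau]$-endpoints and power-of-$\tau$ length are $\tau$-adic, and when a partition of $\mathbf{S}^1$ into such arcs is a $\tau$-adic subdivision. Granting these, one picks a $\tau$-adic subdivision $\mathcal{P}_0$ of the source containing $0$, $0\cdot g^{-1}$ and all breakpoints of $g$, refines it so that $g$ maps each arc onto a $\tau$-adic arc (using that an affine map of slope a power of $\tau$ and $\mathbf{Z}[\tau]$-intercept sends $\tau$-adic subdivisions to $\tau$-adic subdivisions), and reads off the diagram $(\mathcal{P}, g(\mathcal{P}), k)$. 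The subtlety, absent in the dyadic case, is that there is no canonical $\tau$-adic subdivision of a given arc, and the three bullet points above rest on the arithmetic of $\mathbf{Z}[\tau]$, in particular on $\tau^2 = 1 - \tau$. Since this is precisely what is developed in \cite{burilloF, burilloT}, I would present the short reduction above and, at most, only sketch the refinement lemma.
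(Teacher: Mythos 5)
Your proposal is correct and takes essentially the same route as the paper: both inclusions, with the forward one verified from the combinatorial tree-diagram description (the paper checks only the generators $x_n, y_n, c_n$ of the combinatorial group) and the reverse one reduced, after moving $0\cdot g$ back to $0$ by an element of the combinatorial group, to the coincidence of Cleary's $F_\tau$ with the combinatorial $F_\tau$ of \cite{burilloF}, viewed as the stabilizer of $0$. The only cosmetic difference is that you exhibit an explicit tree-pair diagram for the rotation by $\tau$ (which is correct, since $\mathbf{Z}[\tau]/\mathbf{Z}$ is generated by $\tau$), whereas the paper simply invokes the equality of the two orbits of $0$.
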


\begin{proof}
For the sake of this proof, we denote the group action defined in Section $2$ of \cite{burilloT} by means of combinatorial tree diagrams as $G$, and we will show that it is the same as $T_{\tau}$ above. Note that the combinatorial tree diagrams defined in \cite{burilloT} readily translate into concrete piecewise linear homeomorphisms of $\mathbf{R/Z}$ and we will use the latter without giving all details. 

The elements of the generating set $\{x_n,y_n,c_n\}$ of $G$ (as provided in Section $2$ of \cite{burilloT}) are easily seen to satisfy Definition \ref{Definition: main}, so it follows that $G\leq T_{\tau}$.
Now note that the orbit of $0$ in both group actions $G$ and $T_{\tau}$ is the same since it equals $\mathbf{Z}[\tau]/\mathbf{Z}$. So given an element $f\in T_{\tau}$, there is an element $g\in G$ such that $0\cdot fg^{-1}=0$. In particular, $fg^{-1}\in F_{\tau}$. 

The group action on $\mathbf{R/Z}$ corresponding to the tree diagrams defined in \cite{burilloF} representing elements of $F_{\tau}$ coincides with our definition of $F_{\tau}$, which is also the stabilizer of $0$ in $G$.
It follows that $F_{\tau}\leq G$ and thus $f\in G$. Therefore, $T_{\tau}\leq G$ and $G=T_{\tau}$.
\end{proof}

Our result will make use of the following structural results from \cite{burilloF, burilloT}. Let $F_\tau^c \leq F_\tau$ be the subgroup of elements $g$ for which $\overline{Supp(g)}\subset (0, 1)$.

\begin{proposition}[Burillo--Nucinkis--Reeves {\cite[Proposition 5.2]{burilloF}}]
\label{prop:index2F}
$F_\tau' = (F_\tau^c)'$ is an index-two subgroup of $F_\tau^c$.
\end{proposition}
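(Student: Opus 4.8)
The plan is to follow the standard strategy for Thompson-like groups, adapted to the golden ratio setting: show that $F_\tau$ maps onto $\mathbf{Z}^2$ by recording the germs at the two endpoints $0$ and $1$, identify $F_\tau^c$ as the kernel of this map, and then run the same argument one level down to compute $(F_\tau^c)'$. Concretely, for $g \in F_\tau$ define $\sigma_0(g) \in \mathbf{Z}$ to be the exponent such that the slope of $g$ immediately to the right of $0$ equals $\tau^{\sigma_0(g)}$, and similarly $\sigma_1(g)$ using the slope immediately to the left of $1$. Since composition of piecewise-linear maps multiplies slopes and $\tau$ generates an infinite cyclic group of slopes, the pair $\sigma = (\sigma_0, \sigma_1) \colon F_\tau \to \mathbf{Z}^2$ is a homomorphism. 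First I would check that $\sigma$ is surjective by exhibiting explicit elements of $F_\tau$ realizing $(1,0)$ and $(0,1)$; this uses the fact (noted in the excerpt, from \cite{cleary, burilloF}) that $F_\tau$ acts with enough flexibility near each endpoint, just as the standard generators $x_0, x_1$ of Thompson's $F$ do. It is immediate from the definitions that $\ker \sigma = F_\tau^c$: an element has $\overline{Supp(g)} \subset (0,1)$ precisely when it is the identity on neighborhoods of both endpoints, which in the piecewise-linear world is equivalent to having trivial germ, hence trivial slope exponent, at each endpoint. Therefore $F_\tau / F_\tau^c \cong \mathbf{Z}^2$, so $F_\tau' \leq F_\tau^c$.

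Next I would show $F_\tau' = (F_\tau^c)'$. One inclusion is formal: $(F_\tau^c)' \leq (F_\tau)' = F_\tau'$ since $F_\tau^c \leq F_\tau$. For the reverse, it suffices to show that $F_\tau / (F_\tau^c)'$ is abelian, i.e.\ that every commutator $[g,h]$ of elements of $F_\tau$ already lies in $(F_\tau^c)'$. Given $g, h \in F_\tau$, since $\sigma$ is a homomorphism to the abelian group $\mathbf{Z}^2$ we have $\sigma([g,h]) = 0$, so $[g,h] \in F_\tau^c$. Thus $F_\tau' \leq F_\tau^c$ and we must promote this to $F_\tau' \leq (F_\tau^c)'$. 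The way to do this is to observe that $F_\tau^c$ is, in turn, abelianized by a pair of "local slope" invariants: near any fixed breakpoint value, or more robustly, one shows directly that $F_\tau^c$ itself surjects onto an abelian group with kernel contained in $F_\tau'$. Here I would instead use the cleaner route from \cite{burilloF}: $F_\tau^c$ is a directed union (or at least generated by) copies of $F_\tau$ supported on subintervals with endpoints in $\mathbf{Z}[\tau] \cap (0,1)$, each of which is perfect-modulo-its-own-endpoint-germs, and chaining these together shows $[F_\tau, F_\tau] = [F_\tau^c, F_\tau^c]$. The key algebraic fact powering this is that $F_\tau$ restricted to any standard subinterval is again isomorphic to $F_\tau$, and that two such subintervals can always be engulfed in a larger one inside $(0,1)$.

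Finally, to see that $F_\tau'$ has index two — not index one — in $F_\tau^c$, I would produce a surjective homomorphism $F_\tau^c \to \mathbf{Z}/2\mathbf{Z}$ and show its kernel is exactly $F_\tau'$. This is where the golden ratio genuinely differs from the dyadic Thompson group, and I expect this to be the main obstacle. For Thompson's $F$ the analogous "$F^c$" is perfect, but here there is an extra $\mathbf{Z}/2$ coming from the arithmetic of $\mathbf{Z}[\tau]$: because $\tau^2 = 1 - \tau$, the total "displacement" or a parity of the breakpoint combinatorics is constrained, and one extracts a $\mathbf{Z}/2$-valued invariant of $g \in F_\tau^c$ that is additive under composition and which distinguishes the two cosets of $F_\tau'$. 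I would construct this invariant following \cite[Proposition 5.2]{burilloF} — likely as the mod-$2$ reduction of a suitable sum of slope exponents over the breakpoints, or equivalently read off from the tree-pair diagram — verify it is a well-defined homomorphism onto $\mathbf{Z}/2\mathbf{Z}$ using $\tau^2 = 1-\tau$, check it kills all commutators (so its kernel contains $F_\tau'$), and exhibit one element of $F_\tau^c$ on which it is nontrivial (so $F_\tau' \subsetneq F_\tau^c$). Combined with the previous paragraph, which gives $F_\tau' = (F_\tau^c)' \supseteq \ker$, this forces $\ker = F_\tau'$ and hence $[F_\tau^c : F_\tau'] = 2$. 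Since this proposition is quoted verbatim from \cite{burilloF}, in the actual write-up I would simply cite it; the sketch above indicates how one would reconstruct the proof if needed.
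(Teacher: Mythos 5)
The paper itself offers no proof of this statement: it is imported verbatim from Burillo--Nucinkis--Reeves \cite[Proposition 5.2]{burilloF}, so your closing remark that a citation suffices is exactly what the paper does, and your first paragraph (the germ homomorphism $\sigma = (\sigma_0,\sigma_1)\colon F_\tau \to \mathbf{Z}^2$, its surjectivity, and $\ker\sigma = F_\tau^c$) is a correct and standard reduction showing $F_\tau' \leq F_\tau^c$.

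As a reconstruction of the cited proof, however, your sketch has two genuine gaps. First, the engulfing argument for $F_\tau' \subseteq (F_\tau^c)'$ rests on the claim that each copy of $F_\tau$ on a subinterval is ``perfect modulo its own endpoint germs''; that is precisely what fails here, and its failure (by a $\mathbf{Z}/2$) is the content of the proposition. If the claim were true, then engulfing the support of an arbitrary element of $F_\tau^c$ in a standard subinterval of $(0,1)$ would exhibit it as a product of commutators of elements of $F_\tau^c$, i.e.\ $F_\tau^c$ would be perfect, contradicting the index-two statement you are proving. A correct route is either the presentation-based computation of the abelianization in \cite{burilloF}, or to use that $F_\tau'$ is perfect (it is in fact simple), which gives $F_\tau' = (F_\tau')' \subseteq (F_\tau^c)' \subseteq F_\tau'$ directly. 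Second, in the last step you assert that the earlier work gives $(F_\tau^c)' \supseteq \ker$ of your $\mathbf{Z}/2$-valued map; nothing you established implies this. A surjection $F_\tau^c \to \mathbf{Z}/2$ killing $F_\tau'$ only shows the index is at least $2$; the upper bound, i.e.\ that every element of the kernel is a product of commutators (equivalently that the abelianization of $F_\tau^c$ is no larger than $\mathbf{Z}/2$), is the substantive computation in \cite{burilloF}. Relatedly, your proposed formula for the $\mathbf{Z}/2$ invariant as a mod-$2$ sum of slope exponents over breakpoints is doubtful: the natural candidate (the sum of slope-exponent jumps) telescopes to $\sigma_1-\sigma_0$ and so vanishes identically on $F_\tau^c$, and in \cite{burilloF} the invariant is extracted from the tree-pair/presentation combinatorics rather than from an evident piecewise-linear formula. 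Since the proposition is used here as a black box, simply citing it, as you ultimately propose, is the right call.
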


\begin{proposition}[Burillo--Nucinkis--Reeves {\cite[Theorem 3.2]{burilloT}}]
\label{prop:index2T}

$T_\tau'$ is an index-two subgroup of $T_\tau$.
\end{proposition}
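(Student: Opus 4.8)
The plan is to prove that $T_\tau/T_\tau'\cong\mathbf{Z}/2\mathbf{Z}$. By Lemma~\ref{lemma: same group} we are free to work with the tree-pair-diagram model of $T_\tau$ from \cite{burilloT}. The lower bound $[T_\tau:T_\tau']\geq 2$ will come from an explicit surjective homomorphism $\chi\colon T_\tau\to\mathbf{Z}/2\mathbf{Z}$, and the upper bound $[T_\tau:T_\tau']\leq 2$ from showing $\ker\chi\subseteq T_\tau'$, i.e. that the abelianisation is a quotient of $\mathbf{Z}/2\mathbf{Z}$.

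\emph{Construction of $\chi$.} A $\tau$-caret subdivides a standard interval $[a,b]$ either at $a+\tau(b-a)$ or at $a+\tau^2(b-a)$, and compatibility with the tree structure leaves exactly these two options; call them $x$-carets and $y$-carets. For a circular tree-pair diagram $(S,T,\sigma)$ I would set
\[
\chi(S,T,\sigma):=\#\{y\text{-carets of }T\}-\#\{y\text{-carets of }S\}\pmod{2}.
\]
One first checks that $\chi$ is independent of the representative. The moves between representatives are: a simultaneous expansion, which appends the same caret to a leaf of $S$ and to the matching leaf of $T$ and hence changes both $y$-counts by the same amount; the basic $\tau$-relation, which identifies the two $2$-caret trees ``$x$ at the root, then $x$ on the left child'' and ``$y$ at the root, then $y$ on the right child'' — both realising the subdivision of $[0,1]$ with cut points $\tau^2$ and $\tau$ — and hence changes a $y$-count by $\pm 2$; and relabelling via $\sigma$, which touches no caret count. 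So $\chi$ is well defined, and it is a homomorphism because composing two diagrams over a common refinement is additive on $y$-counts. Finally $\chi$ is surjective: the rotation $\rho_\tau$ by $\tau$ is represented by the circular diagram $(\,y\text{-caret}\,,\,x\text{-caret}\,,\,\mathrm{swap}\,)$, so $\chi(\rho_\tau)=1$.

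\emph{Upper bound.} Since $\mathbf{Z}[\tau]/\mathbf{Z}$ is infinite cyclic, generated by the class of $\tau$, the rotation subgroup of $T_\tau$ equals $\langle\rho_\tau\rangle$ and already acts transitively on the orbit $\mathbf{Z}[\tau]/\mathbf{Z}$ of $0$; multiplying any $f\in T_\tau$ by a suitable power of $\rho_\tau$ so as to fix $0$ shows $T_\tau=\langle F_\tau,\rho_\tau\rangle$. Hence $T_\tau^{\mathrm{ab}}$ is generated by the images of $F_\tau$ and of $\rho_\tau$. The two germ-slope homomorphisms $F_\tau\to\mathbf{Z}$ (recording the slope of an element on either side of its fixed point $0$) die in $T_\tau^{\mathrm{ab}}$: this is the ``fragment-and-displace-around-the-circle'' mechanism behind the perfectness of Thompson's group $T$ — a germ at $0$ is realised by a boundedly-supported element, which is then written as a product of commutators using elements of $T_\tau$ whose iterates sweep its support off itself. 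Thus the image of $F_\tau$ in $T_\tau^{\mathrm{ab}}$ equals the image of its boundedly-supported subgroup $F_\tau^c$, which by Proposition~\ref{prop:index2F} is a quotient of $F_\tau^c/F_\tau'\cong\mathbf{Z}/2\mathbf{Z}$. A further displacement argument — or, alternatively, abelianising the presentation of $T_\tau$ from \cite{burilloT}, in which the only torsion-creating relation is the basic $\tau$-relation — shows that the image of $\rho_\tau$ also lies in this $\mathbf{Z}/2\mathbf{Z}$; for instance one verifies $\rho_\tau\beta^{-1}\in T_\tau'$, where $\beta\in F_\tau$ is the element with diagram $(\,x\text{-caret}\,,\,y\text{-caret}\,,\,\mathrm{id}\,)$. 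Hence $T_\tau^{\mathrm{ab}}$ is a quotient of $\mathbf{Z}/2\mathbf{Z}$, and combined with the lower bound it is exactly $\mathbf{Z}/2\mathbf{Z}$, so $T_\tau'$ has index $2$.

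The main obstacle is the well-definedness of $\chi$: one must confirm that \emph{every} identity between circular tree-pair diagrams — generated by simultaneous expansions, the basic $\tau$-relation, and the cyclic moves — alters the $y$-caret count only by an even number. The basic $\tau$-relation is the crucial case, and it is precisely here that the irrational-slope subdivision rule enters (unlike the dyadic case, where distinct trees give distinct subdivisions). The secondary delicate point is the displacement argument that kills the germ homomorphisms; this is where the circle, rather than the interval, is genuinely used, and it runs parallel to the classical proof that Thompson's $T$ is perfect.
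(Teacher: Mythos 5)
First, a point of comparison: the paper does not prove this proposition at all --- it is imported from Burillo--Nucinkis--Reeves \cite{burilloT}, where it is obtained from an explicit presentation of $T_\tau$. Your lower bound reconstructs that computation dynamically/combinatorially and is essentially sound: the parity $\chi$ of the difference of $y$-caret counts is unchanged by simultaneous expansions and by the basic relation (which trades two $x$-carets for two $y$-carets), and $\chi(\rho_\tau)=1$. The point you flag as the main obstacle --- that \emph{every} coincidence of circular tree-pair diagrams is generated by expansions, the basic relation and the cyclic relabelling --- is exactly the content of the combinatorial calculus of \cite{burilloF,burilloT} (used in Lemma~\ref{lemma: same group}), so it can legitimately be cited rather than reproved.

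The genuine gaps are in your upper bound. (a) The mechanism you invoke to kill the germ homomorphisms --- that the boundedly supported element realising the germ at $0$ ``is then written as a product of commutators using elements of $T_\tau$ whose iterates sweep its support off itself'' --- is not valid in this finitary setting: the sweeping trick requires an infinite product (Mather's argument), and no argument of this shape can work, because elements supported in a proper arc are \emph{not} all in $T_\tau'$; their image in the abelianisation is precisely the $\mathbf{Z}/2\mathbf{Z}$ you are constructing (compare Lemma~\ref{lem:comm}), so your claim taken literally would contradict your own lower bound. What is true, and suffices, is weaker: realise the germ of $f\in F_\tau$ at $0$ by some $h$ supported in a small arc around $0$, so that $fh^{-1}\in T_\tau(0)\cong F_\tau^c$, and then conjugate $h$ by a rotation to push its support off $0$; modulo $T_\tau'$ this replaces $h$ by an element of $F_\tau^c$, giving that the image of $F_\tau$ in $T_\tau^{\mathrm{ab}}$ equals the image of $F_\tau^c$, with no commutator-writing. (b) More seriously, the step that actually closes the upper bound --- that the class of $\rho_\tau$ lies in this $\mathbf{Z}/2\mathbf{Z}$, via $\rho_\tau\beta^{-1}\in T_\tau'$ --- is asserted but never verified, and your fallback (abelianise the presentation of \cite{burilloT}) is exactly the citation the argument was meant to replace; without it you only know $T_\tau^{\mathrm{ab}}$ is generated by a $\mathbf{Z}/2\mathbf{Z}$ together with the class of the infinite-order element $\rho_\tau$, which a priori could be $\mathbf{Z}\oplus\mathbf{Z}/2\mathbf{Z}$. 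A clean repair using only tools of this paper whose proofs do not rely on the present proposition: by Lemma~\ref{lem:Tus}, every $g\in T_\tau$ lies in $T_\tau(x)T_\tau(y)'$ with $x,y\in\mathbf{Z}[\tau]/\mathbf{Z}$; since $T_\tau(y)'\leq T_\tau'$ and each $T_\tau(x)$ is conjugate in $T_\tau$ to $T_\tau(0)\cong F_\tau^c$, the abelianisation is generated by the image of $F_\tau^c$, hence is a quotient of $F_\tau^c/(F_\tau^c)'\cong\mathbf{Z}/2\mathbf{Z}$ by Proposition~\ref{prop:index2F}. Combined with your $\chi$, this yields index exactly $2$ and also removes the need for the germ discussion and for the element $\beta$ altogether.
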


In this section we study the homological, algebraic and dynamical properties of $T_\tau$ that will be needed to prove Theorem \ref{thm:main}.

\subsection{Dynamical properties}

In this subsection we collect some useful facts about the dynamics of the action of $T_\tau$ on $\mathbf{R}/\mathbf{Z}$. These will be used to prove that $T_\tau'$ is uniformly simple. Note that in order to apply Theorem \ref{thm:zhuang}, simplicity of $T_{\tau}'$ from Proposition \ref{prop:index2T} is not sufficient, and uniform simplicity is needed.\\

We now record a few simple facts about the transitivity properties of $F_\tau$.
For every $a, b \in \mathbf{Z}[\tau] \cap [0, 1]$, let $F_\tau[a, b]$ be the set of elements whose support is contained in $[a, b]$.
It is shown in \cite[Proposition 6.2]{burilloF} that $F_\tau[a, b]$ is naturally isomorphic to $F_\tau$. (In fact, the two actions are topologically conjugate: the conjugating map is a carefully chosen piecewise linear map $[a, b] \to [0, 1]$).

\begin{lemma}
\label{lem:transF}
The action of $F_\tau$ on the set of ordered $n$-tuples $0 < x_1 < \cdots < x_n < 1$ in $\mathbf{Z}[\tau]\cap [0,1]$ is transitive, and the stabilizer of each such $n$-tuple is isomorphic to $F_\tau^{n+1}$. 
\end{lemma}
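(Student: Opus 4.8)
The plan is to establish transitivity by a standard ``conveyor belt'' argument, and then to analyze the stabilizer by cutting the circle (really the interval $[0,1]$, since elements of $F_\tau$ fix the endpoints) into pieces. First I would prove transitivity on $n$-tuples. It suffices to show that $F_\tau$ acts transitively on single points of $\mathbf{Z}[\tau] \cap (0,1)$ in a way that can be iterated: given two tuples $0 < x_1 < \cdots < x_n < 1$ and $0 < y_1 < \cdots < y_n < 1$, I would construct an element of $F_\tau$ sending $x_i \mapsto y_i$ for all $i$ by working on each subinterval separately. Concretely, one shows that for any $a < b$ in $\mathbf{Z}[\tau] \cap [0,1]$ and any $c$ with $a < c < b$, $c \in \mathbf{Z}[\tau]$, there is an element of $F_\tau[a,b]$ (supported in $[a,b]$) taking a prescribed point $c$ to a prescribed point $c'$ in $(a,b) \cap \mathbf{Z}[\tau]$; applying the isomorphism $F_\tau[a,b] \cong F_\tau$ from \cite[Proposition 6.2]{burilloF}, this reduces to the single statement that $F_\tau$ acts transitively on $\mathbf{Z}[\tau] \cap (0,1)$. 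That last fact I would deduce from Cleary's description of $F_\tau$ via tree-pair (or caret) diagrams: the points of $\mathbf{Z}[\tau] \cap (0,1)$ are exactly the ``breakpoints'' reachable by finite subdivisions, and the standard tree-pair combinatorics produces an element carrying any one subdivision point to any other. Then transitivity on $n$-tuples follows by composing elements supported in consecutive gaps: first move $x_1$ to $y_1$ using an element supported on a small interval around $x_1$ that avoids $x_2, \ldots, x_n$ (possible after shrinking, since these are finitely many points), then move $x_2$ to $y_2$ using an element supported in the gap $(y_1, 1)$ avoiding the remaining $x_i$ and $y_i$, and so on inductively.

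For the stabilizer computation, fix the tuple $0 < x_1 < \cdots < x_n < 1$ and set $x_0 = 0$, $x_{n+1} = 1$. An element $g \in F_\tau$ fixing every $x_i$ must preserve each closed subinterval $[x_i, x_{i+1}]$, $i = 0, \ldots, n$, and conversely any homeomorphism preserving all these subintervals fixes every $x_i$. Thus the stabilizer is precisely the set of elements of $F_\tau$ that restrict to a self-homeomorphism of each $[x_i, x_{i+1}]$; since the breakpoints and slopes conditions are local, such an element is determined by an arbitrary $(n+1)$-tuple $(g_0, \ldots, g_n)$ where $g_i$ is the restriction of $g$ to $[x_i, x_{i+1}]$, and each $g_i$ must itself be piecewise linear with breakpoints in $\mathbf{Z}[\tau]$ and slopes powers of $\tau$, i.e.\ $g_i \in F_\tau[x_i, x_{i+1}]$. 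Using again $F_\tau[x_i, x_{i+1}] \cong F_\tau$, this exhibits the stabilizer as the direct product $F_\tau^{n+1}$. One subtlety to verify is that every such tuple $(g_0, \ldots, g_n)$ genuinely assembles to an element of $F_\tau$ on the nose (not merely a piecewise linear homeomorphism with the right slopes and breakpoints away from the $x_i$): this holds because the $x_i$ themselves lie in $\mathbf{Z}[\tau]$, so the glued map still has all breakpoints in $\mathbf{Z}[\tau]$, and the three defining conditions of $F_\tau$ (finitely many breakpoints in $\mathbf{Z}[\tau]$, slopes powers of $\tau$) are preserved under this gluing. The direct product structure is then immediate since two elements of the stabilizer compose componentwise on each subinterval.

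The main obstacle I anticipate is the careful bookkeeping in the transitivity step — specifically, making sure that when I move $x_i$ to $y_i$ I can always choose the supporting interval small enough to avoid all the other relevant points, and that the inductive ordering ($y_1 < y_2 < \cdots$ already in place while $x_{i+1}, \ldots, x_n$ still need adjusting, all living in the shrinking gap $(y_i, 1)$) is consistent. This is not deep, but it requires stating the single-point transitivity of $F_\tau$ on $\mathbf{Z}[\tau] \cap (0,1)$ cleanly and invoking the self-similarity isomorphism $F_\tau[a,b] \cong F_\tau$ at the right moments; once that is set up, both halves of the lemma fall out. A cleaner packaging, which I would probably adopt, is to prove the single-point transitivity and the $F_\tau[a,b] \cong F_\tau$ self-similarity first as the two inputs, and then run a single induction on $n$ that simultaneously yields transitivity on $n$-tuples and the identification of the stabilizer with $F_\tau^{n+1}$, using at each step that the stabilizer of $x_1, \ldots, x_n$ inside the stabilizer of $x_1, \ldots, x_{n-1}$ splits off one more $F_\tau$ factor corresponding to the newly subdivided gap.
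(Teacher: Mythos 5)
Your route is essentially the paper's: the stabilizer is identified, via the self-similarity isomorphism $F_\tau[a,b] \cong F_\tau$ of Burillo--Nucinkis--Reeves, with the direct product $F_\tau[0,x_1] \times \cdots \times F_\tau[x_n,1] \cong F_\tau^{n+1}$ (the paper's argument is exactly your gluing argument), and high transitivity is deduced from low transitivity by induction using that stabilizer description — the paper starts from Cleary's double transitivity, you start from single-point transitivity, which is again Cleary's regular-subdivision result and should be cited as such rather than presented as routine tree-diagram combinatorics, since the fact that every point of $\mathbf{Z}[\tau] \cap (0,1)$ is reachable by regular subdivisions is a genuine theorem. One step in your first formulation of the induction is misstated and would fail as written: an element ``supported on a small interval around $x_1$ that avoids $x_2, \ldots, x_n$'' cannot send $x_1$ to $y_1$ when $y_1 \geq x_2$, because its support would have to contain $y_1$ as well. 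The avoidance requirement is neither achievable in general nor needed: move $x_1$ to $y_1$ by an arbitrary element of $F_\tau$; order preservation automatically places the images of $x_2, \ldots, x_n$ in $(y_1, 1)$, and one then inducts inside the factor $F_\tau[y_1,1] \cong F_\tau$ of the stabilizer of $y_1$. Your ``cleaner packaging'' at the end is precisely this corrected induction (and precisely the paper's proof), so with that version adopted and the transitivity input attributed to Cleary, the lemma goes through.
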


\begin{proof}
  Since for each $a,b\in \mathbf{Z}[\tau] \cap [0, 1]$, $F_\tau[a, b]$ is naturally isomorphic to $F_\tau$, this implies the statement about the stabilizers. Indeed, by the self similarity feature of the definition, the stabilizer of $\{x_1, \ldots, x_n\}$ splits as a direct product $F_\tau[0, x_1] \times \cdots \times F_\tau[x_n, 1]$.

Double transitivity is proven in \cite[Corollary 1]{cleary}. Finally, high transitivity follows from an elementary inductive argument using transitivity and the previous description of the stabilizers.
\end{proof}

\begin{lemma}\label{lem:nTransPrime}
For each $n\in \mathbf{N}$, the action of $F_{\tau}'$ on ordered 
$n$-tuples in $\mathbf{Z}[\tau] \cap (0,1)$ is transitive. 
\end{lemma}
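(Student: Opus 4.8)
The goal is to upgrade the transitivity of $F_\tau$ on $n$-tuples (Lemma \ref{lem:transF}) to the same statement for the commutator subgroup $F_\tau'$, now working with tuples in the \emph{open} interval $(0,1)$. The natural strategy is to start with an element $f \in F_\tau$ carrying one tuple to another (which exists by Lemma \ref{lem:transF}), and then correct it by multiplying by an element of $F_\tau'$ so that the product still does the job but now lies in $F_\tau'$. For this correction step I would use the self-similar structure: if $0 < x_1 < \cdots < x_n < 1$ and $0 < y_1 < \cdots < y_n < 1$ are the two tuples, pick an auxiliary point, say inside $(0, \min\{x_1,y_1\})$ or inside $(\max\{x_n,y_n\},1)$, so that there is ``room'' near an endpoint that is disjoint from all the marked points. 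Concretely, choose $0 < a < \min\{x_1, y_1\}$; then any element supported in $[0,a]$ fixes all the $x_i$ and $y_i$.

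\textbf{Key steps.} First, by Lemma \ref{lem:transF} fix $f \in F_\tau$ with $x_i \cdot f = y_i$ for all $i$. Second, observe that $F_\tau$ surjects onto its abelianization, which by Proposition \ref{prop:index2F} (and the standard computation $F_\tau^{\mathrm{ab}} \cong \mathbf{Z}^2$, analogous to Thompson's $F$) is detected by the two germ homomorphisms at $0$ and at $1$ recording the exponent of $\tau$ in the one-sided slopes; so $f \in F_\tau'$ if and only if these two slope-germs vanish. Third, I want to kill these two germ obstructions by premultiplying $f$ by a suitable $h \in F_\tau[0,a]$ and postmultiplying by a suitable $h' \in F_\tau[b,1]$ where $b > \max\{x_n, y_n\}$: such $h, h'$ are supported away from all marked points, so $h f h'$ still sends $x_i \mapsto y_i$; and since $F_\tau[0,a] \cong F_\tau$ has an element realizing any prescribed germ at $0$ (and similarly $F_\tau[b,1]$ at $1$), we can arrange that $h f h'$ has trivial germs at both endpoints. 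Fourth, conclude $hfh' \in F_\tau'$ and it carries the first tuple to the second, which is exactly the claim. A cleaner variant: since $F_\tau^c$ consists of elements with both germs trivial, it suffices to correct $f$ to land in $F_\tau^c$; and actually $F_\tau' = (F_\tau^c)'$ by Proposition \ref{prop:index2F}, so after landing in $F_\tau^c$ one needs one further halving of the remaining $\mathbf{Z}/2$ obstruction — this is where the ``room near an endpoint'' gets used a second time, flipping the parity by conjugating a suitably supported element.

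\textbf{Main obstacle.} The genuinely delicate point is the last parity correction: getting into $F_\tau^c$ is a germ computation, but $F_\tau^c / F_\tau'$ is still $\mathbf{Z}/2$ (Proposition \ref{prop:index2F}), so one more invariant must be matched. I would handle this by the standard trick for Thompson-like groups: find an element $w \in F_\tau$ supported in $[0,a]$ that is \emph{not} in $(F_\tau[0,a])'$ — i.e.\ represents the nontrivial class — and note that since it is supported away from the marked points, replacing $hfh'$ by $w \cdot (hfh')$ changes its class in $F_\tau^c/F_\tau'$ without disturbing the tuple. Since $F_\tau[0,a] \cong F_\tau$ surjects onto $F_\tau^c/F_\tau' \cong \mathbf{Z}/2$ compatibly (this compatibility is the part requiring care, and follows from the explicit description of the abelianization via germs together with the self-similar embedding being realized by a piecewise-linear conjugacy fixing neighborhoods of the endpoints), such a $w$ exists, and we can always adjust the parity. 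Once this is in place the proof is a short assembly of the four steps above; the whole argument is an induction-free application of Lemma \ref{lem:transF} plus abelianization bookkeeping.
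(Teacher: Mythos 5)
Your reduction works up to the point where you land in $F_\tau^c$: since elements of $F_\tau$ fix $0$ and $1$ and are linear near them, killing the two one-sided slope germs of $hfh'$ does force it to be the identity near both endpoints, and $h,h'$ do not disturb the tuples. (As an aside, your intermediate claim that $F_\tau^{\mathrm{ab}}\cong\mathbf{Z}^2$ and that membership in $F_\tau'$ is detected by the two germs alone cannot be right: it would give $F_\tau^c=F_\tau'$, contradicting Proposition \ref{prop:index2F}; your ``cleaner variant'' is the correct formulation.) The genuine gap is the parity step. You need a correcting element $w$ that simultaneously (i) represents the nontrivial class of $F_\tau^c/F_\tau'\cong\mathbf{Z}/2$, (ii) fixes all marked points, and (iii) is supported away from $0$ and $1$ --- otherwise multiplying by $w$ reintroduces a nontrivial germ and undoes the previous correction. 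An element ``supported in $[0,a]$ and not in $(F_\tau[0,a])'$'' guarantees neither (i) nor (iii): its class could live in the germ-at-$0$ direction of $F_\tau^{\mathrm{ab}}$ rather than in the $\mathbf{Z}/2$ summand. Moreover, the justification you offer for (i) is not correct as stated: the isomorphism $F_\tau\cong F_\tau[0,a]$ is induced by a piecewise linear homeomorphism $[0,1]\to[0,a]$, not by a conjugacy fixing neighborhoods of both endpoints, so whether the exotic $\mathbf{Z}/2$ character of $F_\tau^c$ restricts nontrivially to such a copy is exactly what would need to be proved, not something that can be read off the self-similar embedding.

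The step can be repaired without any computation of the $\mathbf{Z}/2$ character: take any $v\in F_\tau^c\setminus F_\tau'$ (it exists by Proposition \ref{prop:index2F}), and use $2$-transitivity of $F_\tau$ (Lemma \ref{lem:transF}) to find $l\in F_\tau$ carrying an interval containing $\overline{Supp(v)}$ into a compact subinterval of $(0,a)$; then $w:=l^{-1}vl$ is supported away from $0$, $1$ and all marked points, and since both $F_\tau^c$ and $F_\tau'=(F_\tau^c)'$ are normal in $F_\tau$ and every automorphism of $\mathbf{Z}/2$ is trivial, $w$ still represents the nontrivial class. With this insertion your argument closes. Note, however, that the paper's proof bypasses all abelianization bookkeeping: after cutting $g$ off near the endpoints to get $f=gh^{-1}\in F_\tau^c$ fixing $[0,a]\cup[b,1]$ pointwise and sending $x_i\mapsto y_i$, it chooses (by proximality) $l$ with $[a,b]\cdot l\subset(b,1)$, so that $l^{-1}f^{-1}l$ fixes every $x_i$ and the commutator $[l,f]=(l^{-1}f^{-1}l)f$ both moves the tuple correctly and lies in $F_\tau'$ by construction. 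That trick renders the entire $\mathbf{Z}/2$ discussion --- the only delicate part of your plan --- unnecessary.
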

\begin{proof}
Let $0 < x_1 < \cdots < x_n < 1$ and $0 < y_1 < \cdots < y_n < 1$.
We choose $a \in (0, \min\{x_1, y_1\}) \cap \mathbf{Z}[\tau]$ and $b \in (\max\{x_n, y_n\}, 1) \cap \mathbf{Z}[\tau]$.

By Lemma \ref{lem:transF}, there exists an element $g \in F_\tau$ fixing $a$ and $b$ and sending $x_i$ to $y_i$ for $i = 1, \ldots, n$. Let $h \in F_\tau$ be supported on $[0, a] \cup [b, 1]$ so that
$$g \mid [0, a] \cup [b, 1] = h \mid [0, a] \cup [b, 1].$$
Then $f := gh^{-1} \in F_\tau^c$ fixes $[0, a] \cup [b, 1]$ pointwise and sends $x_i$ to $y_i$, for $i = 1, \ldots, n$.

Using proximality, we find an element $l \in F_\tau$ such that $[a, b]\cdot l\subset (b, 1)$. Then $l^{-1}f^{-1}l$ is supported on $[b, 1]$, and in particular it fixes every $x_i$. Therefore $[l, f] \in F_\tau'$ is the desired element sending $x_i$ to $y_i$ for all $i = 1, \ldots, n$.
\end{proof}

\begin{corollary}
\label{cor:Fus}
The group $F_\tau'$ is $6$-uniformly simple, and $2$-uniformly perfect.
\end{corollary}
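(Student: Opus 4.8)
The plan is to obtain Corollary~\ref{cor:Fus} as an application of Theorem~\ref{thm:uniformly simple} (Gal--Gismatullin), which asserts precisely that for a boundedly supported and proximal action $G \leq \Homeo^+(\mathbf{R})$, the derived subgroup $G'$ is $6$-uniformly simple and $2$-uniformly perfect. So the task reduces to exhibiting $F_\tau$ (or a suitable model of it) as such an action. Concretely, I would first observe that $F_\tau^c$ acts on the open interval $(0,1) \cong \mathbf{R}$, and that this action is boundedly supported by the very definition of $F_\tau^c$: every element has closure of support contained in a compact subinterval of $(0,1)$.

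Next I would verify proximality of the action of $F_\tau^c$ on $(0,1)$. Given nonempty open sets $I, J \subset (0,1)$ with $J$ bounded away from the endpoints (which, after shrinking, we may assume has closure contained in some $[a,b]$ with $0 < a < b < 1$, $a,b \in \mathbf{Z}[\tau]$), I want an element of $F_\tau^c$ carrying $\overline{J}$ into $I$. This follows from the rich transitivity of $F_\tau$ established in Lemma~\ref{lem:transF}: by choosing finitely many points of $\mathbf{Z}[\tau] \cap (0,1)$ straddling $J$ and mapping them into $I$ via an element fixing a neighborhood of $0$ and $1$, one produces the required element, exactly as in the proof of Lemma~\ref{lem:nTransPrime} where proximality of $F_\tau$ is already invoked. (Alternatively one can quote directly that $F_\tau$ acts proximally on $(0,1)$; the point is that $F_\tau^c$ still does, since the conjugating elements can be taken with support compactly contained in $(0,1)$.)

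With both hypotheses in hand, Theorem~\ref{thm:uniformly simple} applies to $G = F_\tau^c$, giving that $(F_\tau^c)'$ is $6$-uniformly simple and $2$-uniformly perfect. The final step is to identify this group with $F_\tau'$: this is exactly the content of Proposition~\ref{prop:index2F} (Burillo--Nucinkis--Reeves), which states $F_\tau' = (F_\tau^c)'$. Hence $F_\tau'$ enjoys the same uniform simplicity and perfectness, completing the proof.

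I expect the only genuine subtlety to be the verification of proximality --- specifically, checking that one can realize the transporting homeomorphisms \emph{within} $F_\tau^c$ rather than merely within $F_\tau$, i.e. with support compactly contained in $(0,1)$. This is handled by the standard trick (used already in Lemma~\ref{lem:nTransPrime}): take an element of $F_\tau$ achieving the desired map on a large subinterval, then correct it near the endpoints by an element supported away from the middle, so that the product fixes neighborhoods of $0$ and $1$. Everything else is a direct citation of results stated above, so the corollary is essentially immediate once the dynamical setup is in place.
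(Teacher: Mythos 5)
Your proposal is correct and follows essentially the same route as the paper: verify that $F_\tau^c$ acts on $(0,1)$ boundedly supportedly and proximally, apply Theorem~\ref{thm:uniformly simple} to get the conclusion for $(F_\tau^c)'$, and identify $F_\tau'=(F_\tau^c)'$ via Proposition~\ref{prop:index2F}. The only cosmetic difference is that the paper gets proximality of $F_\tau^c$ immediately from the $2$-transitivity of $F_\tau'$ on $\mathbf{Z}[\tau]\cap(0,1)$ (Lemma~\ref{lem:nTransPrime}, noting $F_\tau'\subset F_\tau^c$), which packages the endpoint-correction trick you describe.
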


\begin{proof}
The action of $F_{\tau}^c$ on $(0,1)$ is proximal by an application of Lemma \ref{lem:nTransPrime}, since $F_{\tau}'\subset F_{\tau}^c$.
Then we apply Theorem \ref{thm:uniformly simple} to $F_{\tau}^c$ to conclude that $(F_{\tau}^c)'$ satisfies the stated properties. Since $F_{\tau}'=(F_{\tau}^c)'$ by Proposition \ref{prop:index2F}, we are done.
\end{proof}

Let us move to $T_\tau$ and prove the analogous properties.

\begin{lemma}
\label{lem:transT}
The action of $T_\tau$ on the set of circularly ordered $n$-tuples in $\mathbf{Z}[\tau]/\mathbf{Z}$ (i.e., inheriting the natural circular ordering)  is transitive, and the stabilizer is isomorphic to $F_\tau^n$.
\end{lemma}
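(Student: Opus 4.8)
The plan is to mimic the proof of Lemma \ref{lem:transF}, replacing the linear order on $[0,1]$ by the circular order on $\mathbf{R}/\mathbf{Z}$. First I would establish transitivity on circularly ordered $n$-tuples. Given two such tuples $(x_1, \ldots, x_n)$ and $(y_1, \ldots, y_n)$ in $\mathbf{Z}[\tau]/\mathbf{Z}$, I would use the fact that $T_\tau$ contains the rotation by $y_1 - x_1$ (which lies in $\mathbf{Z}[\tau]$), so after composing with this rotation I may assume $x_1 = y_1$. Now both tuples share their first point, and cutting the circle open at $x_1$ turns the remaining points into ordinary ordered $(n-1)$-tuples in $\mathbf{Z}[\tau] \cap (x_1, x_1+1)$; the stabilizer of $x_1$ in $T_\tau$ is isomorphic to $F_\tau$ (as noted after Definition \ref{Definition: main}), so Lemma \ref{lem:transF} applied to this copy of $F_\tau$ produces the desired element. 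This also shows that the stabilizer of a circularly ordered $n$-tuple equals the stabilizer, inside $\mathrm{Stab}_{T_\tau}(x_1) \cong F_\tau$, of the ordered $(n-1)$-tuple $(x_2, \ldots, x_n)$, which by Lemma \ref{lem:transF} is isomorphic to $F_\tau^{(n-1)+1} = F_\tau^n$.

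The one point that needs a little care is identifying the stabilizer of a single point $x \in \mathbf{Z}[\tau]/\mathbf{Z}$ with $F_\tau$ in a way that is compatible with the self-similar splitting used in Lemma \ref{lem:transF}. The issue is that when we cut the circle at $x$, the interval $(x, x+1)$ has endpoints identified, and we want to know that an element of $T_\tau$ fixing $x$ restricts to an element of ``$F_\tau$ on $[x, x+1]$'' in the sense that it has finitely many breakpoints in $\mathbf{Z}[\tau]$, slopes powers of $\tau$, and preserves $\mathbf{Z}[\tau]$; conversely any such element, glued around the circle, lies in $T_\tau$. Since $x \in \mathbf{Z}[\tau]/\mathbf{Z}$, translation by $x$ is a rotation in $T_\tau$ conjugating $\mathrm{Stab}_{T_\tau}(x)$ to $\mathrm{Stab}_{T_\tau}(0)$, so it suffices to treat $x = 0$, and there the identification $\mathrm{Stab}_{T_\tau}(0) \cong F_\tau$ is immediate from the two definitions. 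Thus this step is really just a bookkeeping remark rather than a genuine obstacle.

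The main (mild) obstacle is therefore purely organizational: making sure the conjugation-by-rotation reduction to the case $x_1 = y_1$ is valid, which requires only that $y_1 - x_1 \in \mathbf{Z}[\tau]$ and hence that the corresponding rotation belongs to $T_\tau$ — true since $x_1, y_1 \in \mathbf{Z}[\tau]/\mathbf{Z}$. After that reduction the circular-order problem becomes a linear-order problem for $F_\tau$ and Lemma \ref{lem:transF} does all the work, both for transitivity and for the structure of the stabilizer. I would write the proof in three short sentences: reduce to $x_1 = y_1$ via a rotation; invoke Lemma \ref{lem:transF} for the copy of $F_\tau$ fixing that point; read off the stabilizer as $F_\tau^n$ from the same lemma.
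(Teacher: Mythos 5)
Your proposal is correct and follows essentially the same route as the paper: use a rotation in $T_\tau$ (available since $\mathbf{Z}[\tau]/\mathbf{Z}$ angles are allowed) to get transitivity on points, then identify the stabilizer of a point with $F_\tau$ and invoke Lemma \ref{lem:transF} for both the higher transitivity and the stabilizer $F_\tau^n$. The paper's proof is just a terser version of the same argument, omitting the bookkeeping about cutting the circle open that you spell out.
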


\begin{proof}
Since $T_\tau$ contains all rotations by an angle in $\mathbf{Z}[\tau]/\mathbf{Z}$, the action on $\mathbf{Z}[\tau]/\mathbf{Z}$ is transitive. Moreover, the stabilizer of each point $x\in \mathbf{Z}[\tau]/\mathbf{Z}$ is naturally isomorphic to $F_\tau$, so high transitivity now follows from Lemma \ref{lem:transF}.
\end{proof}

Given $x \in \mathbf{Z}[\tau]/\mathbf{Z}$, we define
$$T_\tau(x) := \{f\in T_{\tau}\mid \exists \text{ an open interval }I\text{ containing $x$ such that }f\in PStab_{T_{\tau}}(I)\};$$
where
$$PStab_{T_{\tau}}(I)=\{f\in T_{\tau}\mid \forall x\in I, x\cdot f=x\}.$$

Under the conjugacy which identifies the stabilizer of $x$ in $T_{\tau}$ with $F_\tau$, the subgroup $T_\tau(x)$ is naturally topologically conjugate to $F_\tau^c$. In particular, by Proposition \ref{prop:index2F} and Corollary \ref{cor:Fus}, the commutator subgroup of $T_\tau(x)$ has index $2$ in $T_{\tau}(x)$ and it is $6$-uniformly simple. \\

The following factorization lemma is the key step towards establishing uniform simplicity of $T_\tau'$ using uniform simplicity of $F_\tau'$. As in the case of $F_\tau$, for every $a, b \in \mathbf{Z}[\tau]/\mathbf{Z}$ we let $T_\tau[a, b]$ be the set of elements whose support is contained in the closed arc $[a, b]$.

\begin{lemma}
\label{lem:Tus}
Let $g \in T_{\tau}\setminus \{id\}$ be an element. Then there are $x\neq y\in\mathbf{Z}[\tau]/\mathbf{Z}$ such that $g \in T_{\tau}(x) T_{\tau}(y)'$. 
\end{lemma}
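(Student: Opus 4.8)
\textbf{Proof strategy for Lemma~\ref{lem:Tus}.}
The plan is to exhibit an explicit factorization of an arbitrary $g\in T_\tau\setminus\{id\}$ as a product of an element supported away from some point $x$ and an element supported away from some point $y$. First I would pick a point $z\in\mathbf{Z}[\tau]/\mathbf{Z}$ that is moved by $g$, i.e.\ $z\cdot g\neq z$. Since $g$ has only finitely many breakpoints and $z\cdot g\neq z$, I can find a small open arc $J$ containing $z$ such that $J$ and $J\cdot g$ are disjoint and both are proper sub-arcs of the circle. The complement of $J\cup(J\cdot g)$ then contains a nonempty open arc; using transitivity of the rotations in $T_\tau$ (Lemma~\ref{lem:transT}) one can moreover arrange the endpoints of $J$ to lie in $\mathbf{Z}[\tau]/\mathbf{Z}$, so that $J=(a,b)$ with $a,b\in\mathbf{Z}[\tau]/\mathbf{Z}$.

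Next I would use the self-similarity of $T_\tau$: the closed arc complementary to $J$, namely $[b,a]$, carries a copy $T_\tau[b,a]$ of $T_\tau$ (via a piecewise linear conjugacy analogous to \cite[Proposition 6.2]{burilloF}), and this copy acts transitively on its internal $\mathbf{Z}[\tau]$-points. Applying this, I can choose an element $h\in T_\tau$ supported inside $[b,a]$ (hence fixing a neighborhood of $z$, so $h\in T_\tau(z)$... wait, more precisely $h\in T_\tau[b,a]$ which fixes the open arc $J\ni z$, so $h\in T_\tau(z)$) such that $h$ agrees with $g$ on the arc $J\cdot g$ — this is possible because $g$ restricted to $J$ maps $J$ homeomorphically onto $J\cdot g\subset[b,a]$, and $T_\tau[b,a]$ is rich enough to realize, on the sub-arc $J\cdot g$, the piecewise linear homeomorphism prescribed by $g|_J$ composed with the inclusion. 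Then $g h^{-1}$ fixes $J$ pointwise: indeed on $J$, $g$ maps into $J\cdot g$ where $h^{-1}$ undoes it, and on a neighborhood of $z$ the map $h^{-1}$ is the identity while $g$ moves $J$ off itself — let me restate this more carefully below.

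\textbf{The clean version.} Let me instead argue as follows. Choose $z$ with $z\cdot g\neq z$ and an open $\mathbf{Z}[\tau]$-arc $J\ni z$ with $\overline{J}\cap\overline{J\cdot g}=\varnothing$ and $J\cup(J\cdot g)\neq\mathbf{S}^1$. Let $x:=z$ and let $y\in J$ be any point with $y\neq z$ (or rather, pick $y$ to be a $\mathbf{Z}[\tau]$-point inside $J\cdot g$). By Lemma~\ref{lem:transT} and self-similarity there is $h\in T_\tau$ supported in a closed arc disjoint from $J$ but containing $J\cdot g$ — so $h$ fixes $J$ pointwise, whence $h\in T_\tau(x)$ — and such that $h$ coincides with $g$ on $J$: concretely, $h|_J$ is defined to be the restriction $g|_J\colon J\to J\cdot g$, extended by a piecewise linear homeomorphism of the rest of the supporting arc, and by the identity outside it; this lies in $T_\tau$ because all the breakpoints land in $\mathbf{Z}[\tau]/\mathbf{Z}$ and the slopes are powers of $\tau$. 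Hmm, but $h$ must fix $J$ pointwise \emph{and} agree with $g$ on $J$ — those conflict unless $g$ fixes $J$. So the correct split is: take $h\in T_\tau$ agreeing with $g$ on $\overline{J}$ and supported in a proper arc $K\supset \overline{J}\cup\overline{J\cdot g}$, with $K\neq\mathbf{S}^1$. Then $h\in T_\tau(y)$ for any $y\notin K$, and $gh^{-1}$ fixes $\overline{J}$ pointwise (since $g$ and $h$ agree there), hence $gh^{-1}\in T_\tau(x)$ with $x:=z\in J$. Thus $g=(gh^{-1})h\in T_\tau(x)T_\tau(y)$. Finally, to land in $T_\tau(x)T_\tau(y)'$ rather than $T_\tau(x)T_\tau(y)$, I would note that $T_\tau(y)$ is topologically conjugate to $F_\tau^c$, whose commutator subgroup has index $2$; so either $h\in T_\tau(y)'$ already, or I absorb the "parity" into the first factor — more carefully, I replace $h$ by a product $h=h_0 h_1$ with $h_1\in T_\tau(y)'$ and $h_0\in T_\tau(y)$ of controlled support, then check $h_0$ can be pushed into $T_\tau(x)$ by enlarging the arc $J$ slightly and re-choosing $x$; this bookkeeping, reconciling the index-two subgroup with the support condition, is the main obstacle and the place where I expect to need the precise structure of $F_\tau^c$ versus $F_\tau'=(F_\tau^c)'$ from Proposition~\ref{prop:index2F}.

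\textbf{Main obstacle.} The genuinely delicate point is the last one: getting the \emph{primed} group $T_\tau(y)'$ on the right. The support-manipulation to get $T_\tau(x)T_\tau(y)$ is routine PL bookkeeping (disjoint supports, self-similarity, breakpoints in $\mathbf{Z}[\tau]$). But the commutator subgroup of $T_\tau(y)\cong F_\tau^c$ is a proper (index-two) subgroup, so one must argue that the "wrong coset" element can always be shifted into the $T_\tau(x)$ factor — this will use that $x$ and $y$ can be chosen with lots of room between their associated arcs, together with Corollary~\ref{cor:Fus} to express the leftover piece appropriately. I would handle it by fixing from the start two disjoint $\mathbf{Z}[\tau]$-arcs $I_x\ni x$ and $I_y\ni y$ whose complement still has nonempty interior, doing the factorization so that the ambiguous parity element is supported in a third arc disjoint from $I_y$ (hence conjugatable into $T_\tau(x)$), and invoking the index-two structure to conclude.
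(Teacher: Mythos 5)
Your reduction to $g\in T_\tau(x)T_\tau(y)$ is sound: choosing an open $\mathbf{Z}[\tau]$-arc $J\ni x$ with $\overline{J}\cap\overline{J\cdot g}=\varnothing$, an element $h\in T_\tau$ supported in a proper closed arc $K\supset\overline{J}\cup\overline{J\cdot g}$ and agreeing with $g$ on $\overline{J}$ does exist (this is routine given Lemma \ref{lem:transT} and the self-similarity $F_\tau[a,b]\cong F_\tau$), and then $gh^{-1}\in T_\tau(x)$, $h\in T_\tau(y)$ for any $y\notin K$. But the lemma requires the second factor to lie in $T_\tau(y)'$, which has index two in $T_\tau(y)\cong F_\tau^c$ (Proposition \ref{prop:index2F}), and this is precisely where your argument stops: you acknowledge that $h$ may lie in the wrong coset and only sketch a plan (``absorb the parity into the first factor'', ``conjugatable into $T_\tau(x)$'', ``invoke the index-two structure'') without proving the statement that makes it work. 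The statement you need is: for every open arc $V$ whose closure avoids $x$ and $y$ there exists $w\in T_\tau(y)\setminus T_\tau(y)'$ with $Supp(w)\subset V$; then, when $h\notin T_\tau(y)'$, the factorization $g=(gh^{-1}w)(w^{-1}h)$ finishes the proof. This is true and repairable in a few lines: take any $w_0$ in the nontrivial coset (it exists by the index-two structure) and conjugate it by an element of $T_\tau(y)$ pushing $Supp(w_0)$ into $V$ (proximality, as in Corollary \ref{cor:Fus}); conjugation does not change the class modulo the commutator subgroup, so the conjugate is still outside $T_\tau(y)'$. Note that your phrase ``conjugatable into $T_\tau(x)$'' is not the right formulation: you cannot conjugate the leftover parity element after the fact (that would change the factorization), you must choose the coset representative with support away from $x$ from the start, which is exactly what the conjugation-plus-normality observation provides. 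No finer structure of $F_\tau^c$ versus $F_\tau'$ is needed beyond index two and proximality. So as written there is a genuine gap at the decisive step, though a fixable one.

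For comparison, the paper's proof avoids the parity issue altogether by constructing the $T_\tau(y)'$-factor directly. It fixes a closed arc $I=[a,b]\ni x$ avoiding $y$ with $I\cdot g$ avoiding $x$ and $y$, and uses $2$-transitivity of $F_\tau'$ (Lemma \ref{lem:nTransPrime}) to find $f\in T_\tau(y)'$ with $I\cdot f=I\cdot g$, so that $gf^{-1}$ maps $I$ onto itself; it realizes $gf^{-1}|_I$ by an element $h_1\in T_\tau[a,b]\leq T_\tau(y)$, and corrects $h_1$ into the commutator subgroup via the commutator trick $h_3=[h_2,h_1]\in T_\tau(y)'$ with $Supp(h_1)\cdot h_2$ displaced off $Supp(h_1)$. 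Then $gf^{-1}h_3^{-1}$ fixes $I$ pointwise, hence lies in $T_\tau(x)$, and $h_3f\in T_\tau(y)'$. Both routes work; the paper's has the advantage that membership in the primed subgroup is automatic (everything on the right is a commutator or a product of elements of $T_\tau(y)'$), whereas your route needs the additional lemma on wrong-coset elements of prescribed support described above.
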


\begin{proof}
 Let  $x \neq y \in \mathbf{Z}[\tau]/\mathbf{Z}$ be such that 
 $x \cdot g \notin \{x, y\}$.
Let $I=[a,b]$ be an arc with endpoints in $\mathbf{Z}[\tau]/\mathbf{Z}$ such that $x \in int(I), y \notin I$ and $I \cdot g$ does not contain $x$ or $y$. Recall that the action of $F_{\tau}'$ on $(0, 1)$ is $2$-transitive, from Lemma \ref{lem:nTransPrime}. Then the natural isomorphism 
$T_{\tau}(y)' \cong F_{\tau}'$ provides an element $f \in T_{\tau}(y)'$ such that $I \cdot g = I \cdot f$. 
Let $h_1\in T_{\tau}[a,b]\leq T_{\tau}(y)$ be such that 
$h_1\mid I=gf^{-1}\mid I$.
By proximality, we find an element $h_2\in T_{\tau}(y)$ such that $supp(h_1)\cdot h_2\cap supp(h_1)=\emptyset$.
Then $h_3=[h_2,h_1]\in T_{\tau}(y)'$ satisfies that $$gf^{-1}h_3^{-1}\mid I=id\mid I.$$
In other words, $gf^{-1}h_3^{-1}$ fixes $I$ pointwise, so $$gf^{-1}h_3^{-1} \in T_{\tau}(x),\qquad  h_3f\in T_{\tau}(y)',\qquad (gf^{-1}h_3^{-1})(h_3f)=g.$$
\end{proof}

\begin{lemma}
\label{lem:comm}

Let $x \in \mathbf{Z}[\tau]/\mathbf{Z}$. Then $T_\tau' \cap T_\tau(x) = T_\tau(x)'$.
\end{lemma}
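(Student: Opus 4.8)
The claim is that $T_\tau' \cap T_\tau(x) = T_\tau(x)'$. One inclusion is immediate: $T_\tau(x)' \leq T_\tau'$ since $T_\tau(x) \leq T_\tau$, and $T_\tau(x)' \leq T_\tau(x)$ trivially, so $T_\tau(x)' \subseteq T_\tau' \cap T_\tau(x)$. The content is the reverse inclusion: if $g \in T_\tau(x)$ and $g$ lies in $T_\tau'$, then $g$ is actually a product of commutators of elements of $T_\tau(x)$, not just of $T_\tau$.

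The plan is to exploit the index-two structure. By Proposition~\ref{prop:index2T}, $T_\tau'$ has index two in $T_\tau$, so there is a surjection $\epsilon: T_\tau \to \mathbf{Z}/2\mathbf{Z}$ with kernel $T_\tau'$. Similarly, the subgroup $T_\tau(x)$ is topologically conjugate to $F_\tau^c$, whose commutator subgroup has index two by Proposition~\ref{prop:index2F} and Corollary~\ref{cor:Fus}; so there is a surjection $\epsilon_x: T_\tau(x) \to \mathbf{Z}/2\mathbf{Z}$ with kernel $T_\tau(x)'$. The key point to establish is that the restriction of $\epsilon$ to $T_\tau(x)$ is nontrivial, i.e.\ surjects onto $\mathbf{Z}/2\mathbf{Z}$. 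Once that is known, both $\epsilon|_{T_\tau(x)}$ and $\epsilon_x$ are surjective homomorphisms $T_\tau(x) \to \mathbf{Z}/2\mathbf{Z}$; since $T_\tau(x)'$ has index exactly two, any surjection to $\mathbf{Z}/2\mathbf{Z}$ has kernel exactly $T_\tau(x)'$ (there is a unique index-two subgroup here because $T_\tau(x)/T_\tau(x)' $ has order $2$ — more precisely, $\ker(\epsilon|_{T_\tau(x)})$ is an index-two subgroup containing $T_\tau(x)'$, hence equals it). Therefore $T_\tau' \cap T_\tau(x) = \ker(\epsilon|_{T_\tau(x)}) = T_\tau(x)'$, which is what we want.

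So the real task reduces to showing $\epsilon|_{T_\tau(x)}$ is surjective, equivalently that $T_\tau(x) \not\subseteq T_\tau'$. For this I would produce an explicit element of $T_\tau(x)$ that is not a product of commutators in $T_\tau$, or rather argue via the homomorphism directly: the index-two character $\epsilon$ on $T_\tau$ is described in \cite{burilloT} (for instance via a parity/mod-$\tau$ type invariant of the tree diagram, analogous to the number of leaves mod something), and one checks it is nonzero on a suitable boundedly-supported generator sitting inside a small interval around $x$. Concretely, conjugating so that $x = 0$ and $T_\tau(x)$ becomes $F_\tau^c$, one takes an element of $F_\tau^c \setminus F_\tau'$ — such exists precisely because $F_\tau' = (F_\tau^c)'$ has index two in $F_\tau^c$ (Proposition~\ref{prop:index2F}) — and verifies it is not in $T_\tau'$ when viewed inside $T_\tau$. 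This is immediate if one knows that the restriction of the $T_\tau$-character to the point-stabilizer $F_\tau$ restricts further to the standard $F_\tau$-character on $F_\tau$; this compatibility of the index-two characters is the one fact I would need to pin down from the structure results, and it should follow from the way the character is defined combinatorially on tree diagrams (the $T_\tau$ diagram for an element of $F_\tau^c$ is built from its $F_\tau$ diagram by attaching trivial carets away from the support, which does not change the parity invariant).

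The main obstacle is precisely verifying this compatibility of the two index-two characters $\epsilon|_{T_\tau(x)}$ and $\epsilon_x$ — i.e.\ that the ambient character of $T_\tau$ does not vanish on $T_\tau(x)$. Everything else is formal manipulation of index-two subgroups. An alternative route avoiding explicit characters: one could instead argue that $T_\tau(x)' $ has index at most two in $T_\tau' \cap T_\tau(x)$ (since $T_\tau(x)'$ has index two in $T_\tau(x)$, hence index at most two in any subgroup of $T_\tau(x)$ containing it), and separately that $T_\tau' \cap T_\tau(x) \neq T_\tau(x)$ by exhibiting an element of $T_\tau(x) \setminus T_\tau'$; then the index-at-most-two forces equality. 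I expect the paper takes essentially this shape, using the explicit description of the sign homomorphisms coming from \cite{burilloF, burilloT}.
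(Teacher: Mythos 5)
Your formal skeleton is exactly right and matches the paper's: the easy inclusion, the observation that $T_\tau' \cap T_\tau(x)$ has index at most two in $T_\tau(x)$ while $T_\tau(x)'$ has index exactly two in $T_\tau(x)$ (via Propositions \ref{prop:index2F} and \ref{prop:index2T}), so everything reduces to showing $T_\tau(x) \not\subseteq T_\tau'$. But that reduction is precisely where your proposal has a genuine gap. You defer the key step to an unverified ``compatibility of the two index-two characters,'' to be checked from the combinatorial description of the abelianization maps in \cite{burilloF, burilloT} via tree diagrams (parity of carets, behaviour under attaching trivial carets away from the support), and you explicitly flag that you have not pinned this down. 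That compatibility is essentially the content of the lemma itself --- it amounts to knowing that the sign character of $T_\tau$ does not die on the boundedly supported copy of $F_\tau^c$ sitting at $x$ --- so as written the argument either begs the question or requires a genuinely new combinatorial verification that your proposal does not supply.

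The paper closes this gap with a soft argument that avoids any explicit character or tree-diagram computation: if $T_\tau(x) \subseteq T_\tau'$, then since $T_\tau'$ is normal and the subgroups $T_\tau(y)$, $y \in \mathbf{Z}[\tau]/\mathbf{Z}$, are all conjugate (transitivity of $T_\tau$ on $\mathbf{Z}[\tau]/\mathbf{Z}$), every $T_\tau(y)$ would lie in $T_\tau'$; but by the factorization Lemma \ref{lem:Tus} these subgroups generate $T_\tau$, forcing $T_\tau = T_\tau'$ and contradicting Proposition \ref{prop:index2T}. If you want to repair your proposal, replace the character-compatibility step by this conjugation-plus-generation argument; the rest of your index-two bookkeeping then goes through verbatim.
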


\begin{proof}
It is clear that $T_{\tau}(x)'\subseteq T_\tau' \cap T_\tau(x)$.
We must show that $T_\tau' \cap T_\tau(x) \subseteq T_\tau(x)'$.
Since $T_{\tau}'$ is an index $2$ subgroup of $T_{\tau}$, it holds that the index of $T_{\tau}'\cap H$ in $H$ of any subgroup $H\leq T_{\tau}$ is at most $2$. 
It follows that $T_\tau' \cap T_\tau(x)$ has index at most $2$ in $T_{\tau}(x)$. We first claim that this index must be precisely equal to $2$. If not, it would hold that $T_{\tau}(x)\subset T_{\tau}'$, and thus $T_{\tau}(y)\subset T_{\tau}'$ for every $y \in \mathbf{Z}[\tau]/\mathbf{Z}$. Since $T_{\tau}$ is generated by the subgroups $\{T_{\tau}(y)\mid y\in \mathbf{Z}[\tau]/\mathbf{Z}\}$ (by an application of Lemma \ref{lem:Tus}),
this would imply that $T_{\tau}'=T_{\tau}$, contradicting Proposition \ref{prop:index2T}.

Since both $T_{\tau}(x)'$ and $T_\tau' \cap T_\tau(x)$ have index $2$ in $T_{\tau}(x)$, and $T_{\tau}(x)'\subseteq T_\tau' \cap T_\tau(x)$, we conclude that they must coincide.
\end{proof}

\begin{corollary}\label{cor: generation}
$\langle \{T_{\tau}(x)'\mid x\in \mathbf{Z}[\tau]/\mathbf{Z}\}\rangle=T_{\tau}'$.
\end{corollary}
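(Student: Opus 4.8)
The plan is to deduce this immediately from the two preceding lemmas. Let $H := \langle \{T_\tau(x)' \mid x \in \mathbf{Z}[\tau]/\mathbf{Z}\} \rangle$. The inclusion $H \subseteq T_\tau'$ is immediate: each $T_\tau(x)'$ is a subgroup of $T_\tau'$ by Lemma \ref{lem:comm} (since $T_\tau(x)' = T_\tau' \cap T_\tau(x) \subseteq T_\tau'$), and $T_\tau'$ is a subgroup, so it contains the subgroup generated by all of them.

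For the reverse inclusion $T_\tau' \subseteq H$, I would take an arbitrary $g \in T_\tau' \setminus \{id\}$ and apply Lemma \ref{lem:Tus} to write $g = g_1 g_2$ with $g_1 \in T_\tau(x)$ and $g_2 \in T_\tau(y)'$ for some $x \neq y$. Then $g_2 \in H$ by definition of $H$, and since $g \in T_\tau'$ we get $g_1 = g g_2^{-1} \in T_\tau'$. But $g_1 \in T_\tau(x)$ as well, so $g_1 \in T_\tau' \cap T_\tau(x) = T_\tau(x)'$ by Lemma \ref{lem:comm}, hence $g_1 \in H$. Therefore $g = g_1 g_2 \in H$. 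Since $g$ was arbitrary, $T_\tau' \subseteq H$, and the two inclusions give the claimed equality.

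There is no real obstacle here; the only thing to be slightly careful about is the logical ordering, namely that Lemma \ref{lem:comm} is genuinely available (its proof used Lemma \ref{lem:Tus} to argue that the $T_\tau(y)$ generate $T_\tau$, but it did not use the present corollary), so there is no circularity. The argument is otherwise a routine two-inclusion chase.

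\begin{proof}
Write $H := \langle \{T_{\tau}(x)' \mid x \in \mathbf{Z}[\tau]/\mathbf{Z}\} \rangle$. By Lemma \ref{lem:comm}, for each $x \in \mathbf{Z}[\tau]/\mathbf{Z}$ we have $T_{\tau}(x)' = T_\tau' \cap T_\tau(x) \subseteq T_\tau'$; since $T_\tau'$ is a subgroup, it contains the subgroup generated by all the $T_{\tau}(x)'$, that is, $H \subseteq T_\tau'$.

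Conversely, let $g \in T_\tau'$; we may assume $g \neq id$. By Lemma \ref{lem:Tus} there are $x \neq y \in \mathbf{Z}[\tau]/\mathbf{Z}$ and elements $g_1 \in T_\tau(x)$, $g_2 \in T_\tau(y)'$ with $g = g_1 g_2$. Then $g_2 \in H$, and $g_1 = g g_2^{-1} \in T_\tau'$ because both $g$ and $g_2$ lie in $T_\tau'$. Since also $g_1 \in T_\tau(x)$, Lemma \ref{lem:comm} gives $g_1 \in T_\tau' \cap T_\tau(x) = T_\tau(x)' \subseteq H$. Hence $g = g_1 g_2 \in H$, and therefore $T_\tau' \subseteq H$. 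Combining the two inclusions, $H = T_\tau'$.
\end{proof}
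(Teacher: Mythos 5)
Your argument is correct and is exactly the one the paper intends: its proof consists of the single line ``this follows immediately from Lemmas \ref{lem:Tus} and \ref{lem:comm}'', and your two-inclusion write-up (decompose $g=g_1g_2$ via Lemma \ref{lem:Tus}, then place $g_1$ in $T_\tau(x)'$ via Lemma \ref{lem:comm}) is precisely that deduction spelled out, with a correct observation that no circularity arises.
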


\begin{proof}
This follows immediately from Lemmas \ref{lem:Tus} and \ref{lem:comm}.
\end{proof}
We are now ready to prove the main result of this subsection.

\begin{proposition}
\label{prop:Tus}
$T_\tau'$ is uniformly simple.
\end{proposition}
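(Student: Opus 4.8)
The plan is to combine the local uniform simplicity (of each $T_\tau(x)'$, which is $6$-uniformly simple as recorded after Lemma \ref{lem:transT}) with the factorization Lemma \ref{lem:Tus} and the identification Lemma \ref{lem:comm}, to get a bound on the number of conjugates needed to express an arbitrary element of $T_\tau'$ in terms of an arbitrary nontrivial element. First I would fix $f, g \in T_\tau' \setminus \{id\}$; the goal is to write $f$ as a bounded product of conjugates of $g^{\pm 1}$, with the bound independent of $f$ and $g$.

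The first step is to reduce the target $f$ to a product of a bounded number of elements, each living in some $T_\tau(x)'$. By Lemma \ref{lem:Tus}, any element of $T_\tau$ (in particular $f$) is a product of two elements, one in some $T_\tau(x)$ and one in some $T_\tau(y)'$; and the $T_\tau(x)$-factor, being in $T_\tau'$ by assumption on $f$ together with Lemma \ref{lem:comm}, actually lies in $T_\tau(x)'$. So $f = f_1 f_2$ with $f_1 \in T_\tau(x)'$, $f_2 \in T_\tau(y)'$ for some $x \neq y$. Thus it suffices to bound, uniformly, the number of conjugates of $g^{\pm 1}$ needed to express an arbitrary element of a fixed $T_\tau(z)'$.

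The second — and I expect the main — step is to produce, from the given nontrivial $g \in T_\tau'$, a nontrivial element inside the relevant local group $T_\tau(z)'$, up to a bounded number of conjugations. Here the idea is to apply Lemma \ref{lem:Tus} to $g$ itself: $g$ acts nontrivially, so some conjugate of $g$ (by an element of $T_\tau$) has a fixed open arc, hence lies in some $T_\tau(w)$; and since $g \in T_\tau'$ this conjugate lies in $T_\tau' \cap T_\tau(w) = T_\tau(w)'$ by Lemma \ref{lem:comm}. Using the transitivity of $T_\tau$ on points of $\mathbf{Z}[\tau]/\mathbf{Z}$ (Lemma \ref{lem:transT}) we may further conjugate so that the fixed arc contains our chosen basepoint $z$, i.e. we obtain a nontrivial $g' \in T_\tau(z)'$ that is a single $T_\tau$-conjugate of $g$. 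The subtlety is that an element of $T_\tau'$ need not be conjugate \emph{within $T_\tau'$} to something supported away from $z$; but for uniform simplicity of $T_\tau'$ we are allowed to conjugate by elements of $T_\tau'$ only — so one must check that the conjugating elements furnished by Lemmas \ref{lem:Tus} and \ref{lem:transT} can be taken in $T_\tau'$, or otherwise absorb an index-two ambiguity. This is where I would be most careful: since $[T_\tau : T_\tau'] = 2$, conjugation by a coset representative sends $T_\tau(z)'$ to some $T_\tau(z')'$, and one can chase through the argument replacing $z$ by $z'$ as needed, so the parity issue does not actually obstruct the bound.

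Finally, once we have a nontrivial $g' \in T_\tau(z)'$ obtained as a bounded product of conjugates of $g^{\pm 1}$, we invoke $6$-uniform simplicity of $T_\tau(z)'$: every element of $T_\tau(z)'$ — in particular each of the two factors $f_1, f_2$ of $f$, after conjugating the relevant local group to $T_\tau(z)'$ by an element of $T_\tau$ (again watching the index-two issue) — is a product of at most $6$ conjugates of $(g')^{\pm 1}$ within $T_\tau(z)'$, hence of at most $6 \cdot (\text{const})$ conjugates of $g^{\pm 1}$ within $T_\tau'$. Multiplying the two factors and tracking all the constants gives an absolute bound $N$ with the property that every element of $T_\tau'$ is a product of at most $N$ conjugates of $g^{\pm 1}$ for every nontrivial $g$, which is exactly $N$-uniform simplicity.
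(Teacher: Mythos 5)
Your first step (factoring $f = f_1 f_2$ with $f_1 \in T_\tau(x)'$, $f_2 \in T_\tau(y)'$ via Lemmas \ref{lem:Tus} and \ref{lem:comm}) is exactly the paper's, and reducing to the $6$-uniform simplicity of the local groups is the right frame. But your second step has a genuine gap. You claim that some $T_\tau$-conjugate of the given nontrivial $g \in T_\tau'$ fixes an open arc and hence lies in some $T_\tau(w)$. This is false in general: conjugation preserves the fixed-point set up to homeomorphism, and $T_\tau'$ contains elements that fix no open arc at all --- for instance fixed-point-free elements (recall $T_\tau$ contains irrational rotations), or elements whose fixed-point set consists of finitely many points. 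For such $g$ no conjugate lies in any $T_\tau(w)$. Moreover, Lemma \ref{lem:Tus} cannot be used the way you use it: it is a \emph{factorization} statement ($g \in T_\tau(x) T_\tau(y)'$), and the two factors are not conjugates of $g$, so it does not produce a bounded product of conjugates of $g^{\pm 1}$ lying in a local group. The subsequent worry about the index-two parity of the conjugators is a side issue; the real obstruction is that a single conjugate of $g$ simply cannot be localized.

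The missing idea is the commutator trick, which is how the paper bridges this step uniformly in $g$. Since $g \neq id$, choose a closed interval $I$ with nonempty interior such that $I \cdot g \cap I = \emptyset$ and $x \notin I \cup (I \cdot g)$, and pick a nontrivial $h \in T_\tau(x)'$ with $Supp(h) \subset int(I)$. Then $k = [g,h] = g^{-1}(h^{-1}gh)$ is a product of two conjugates of $g^{\pm 1}$ (with conjugator $h \in T_\tau'$), it is nontrivial because its restrictions to the disjoint sets $I$ and $I\cdot g$ agree with $h$ and a conjugate of $h^{-1}$ respectively, and its support lies in $I \cup (I\cdot g)$, so $k \in T_\tau(x) \cap T_\tau' = T_\tau(x)'$ by Lemma \ref{lem:comm}. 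Now $6$-uniform simplicity of $T_\tau(x)' \cong F_\tau'$ expresses $f_1$ as a product of at most $6$ conjugates of $k^{\pm 1}$, hence of at most $12$ conjugates of $g^{\pm 1}$, with all conjugators in $T_\tau'$; doing the same for $f_2$ (with $y$ in place of $x$) gives the explicit bound $24$, with no need for the transitivity/parity chase you propose.
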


\begin{proof}
We will show that for any pair $f,g\in T_{\tau}'\setminus \{id\}$, $f$ is a product of at most $24$ elements in the union of the conjugacy classes of $g,g^{-1}$.

By Lemma \ref{lem:Tus}, we know that there are $x\neq y\in\mathbf{Z}[\tau]/\mathbf{Z}$ such that $f=f_1f_2$ where $f_1\in T_{\tau}(x)$ and $f_2\in T_{\tau}(y)'$. Note that by our assumption that $f\in T_{\tau}'$, it must hold that $f_1\in T_{\tau}'$.
By Lemma \ref{lem:comm}, since $f_1\in T_{\tau}'\cap T_{\tau}(x)$, it follows that $f_1\in T_{\tau}(x)'$. \\

The result will follow once we have proved the following claim.

{\bf Claim}: Each $f_i$ is a product of at most $12$ elements in the union of the conjugacy classes of $g,g^{-1}$.

{\bf Proof of claim}: It suffices to show this for $f_1$, the proof for $f_2$ is similar.

Let $I\subset \mathbf{R/Z}$ be a closed interval with nonempty interior such that:
\begin{enumerate}
\item $I\cdot g\cap I=\emptyset$.
\item $x\notin (I\cup (I\cdot g))$.
\end{enumerate}
Let $h\in T_{\tau}(x)'$ be an element such that $Supp(h)\subset int(I)$.
It follows that $$k=[g,h]\in T_{\tau}(x)\cap T_{\tau}'=T_{\tau}(x)'.$$
Since $F_{\tau}'$ is $6$-uniformly simple by Corollary \ref{cor:Fus}, we know that $T_{\tau}(x)'$ is also $6$-uniformly simple. Since $f_1\in T_{\tau}(x)'$, we conclude that $f_1$ is a product of at most $6$ elements of the union of the conjugacy classes of $k,k^{-1}$.
Since $k,k^{-1}$ are products of two elements of the union of the conjugacy classes of $g,g^{-1}$, we conclude the proof.
\end{proof}

\subsection{Finiteness properties}

The goal of this subsection is to show the following:

\begin{proposition}
\label{prop:Tfin}

The group $T_\tau$ is of type $F_\infty$.
\end{proposition}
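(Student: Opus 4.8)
The plan is to apply Brown's finiteness criterion (Proposition \ref{prop:brown}) to a suitable contractible complex on which $T_\tau$ acts. The natural candidate is the analogue of the Stein--Brown complex that has been used to establish type $F_\infty$ for Thompson's group $T$ and its relatives, adapted to the golden ratio subdivision rule. Concretely, one builds a poset whose elements are the circular subdivisions of $\mathbf{S}^1$ coming from $\tau$-adic interval pairs (equivalently, from the tree-pair diagrams of \cite{burilloT}), ordered by refinement; $T_\tau$ acts on this poset, and one passes to the associated geometric realization (or to a Stein-type subcomplex obtained by restricting to ``elementary'' refinements so that the descending links become manageable). I would first recall the tree-diagram model from Lemma \ref{lemma: same group}, then define this complex $X$ and the $T_\tau$-action on it.

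The verification then proceeds through the three hypotheses of Proposition \ref{prop:brown}. For contractibility, the standard strategy is Brown's approach via a filtration of $X$ by the number of leaves (or carets) in a subdivision, showing that each sublevel set is obtained from the previous by coning off along descending links, and that these descending links are highly connected; here the key combinatorial input is that the poset of ``elementary'' subdivisions between two given subdivisions is homotopy equivalent to a wedge of spheres of growing dimension, which is exactly what Stein's argument (and its golden-ratio refinement, already worked out in the $F_\tau$ literature) provides. For cocompactness in each dimension, one observes that $T_\tau$ acts transitively on vertices (this is essentially Lemma \ref{lem:transT}, since a subdivision is determined by its set of breakpoints together with combinatorial data), and more generally that there are finitely many $T_\tau$-orbits of $n$-simplices because an $n$-simplex is a chain of refinements bounded in combinatorial complexity. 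For the stabilizers, the crucial point is that the stabilizer of a vertex (a subdivision with breakpoints in $\mathbf{Z}[\tau]/\mathbf{Z}$) is a finite direct product of copies of $F_\tau$ — this is Lemma \ref{lem:transF} and Lemma \ref{lem:transT} — so it suffices to know that $F_\tau$ is of type $F_\infty$, which is established in \cite{burilloF}; stabilizers of higher simplices are again such finite products, hence also of type $F_\infty$.

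The main obstacle will be the connectivity analysis of the descending links, i.e. establishing that the Stein-type complex is contractible. Unlike the dyadic case, the golden ratio subdivision rule is not ``balanced'': a $\tau$-adic interval subdivides into one piece of length $\tau$ times the original and one of length $\tau^2$ times the original, so the combinatorics of carets, and hence of the poset of elementary refinements, is governed by the Fibonacci recursion rather than by simplicial structure. One must therefore identify the right notion of ``elementary'' subdivision (so that descending links are isomorphic to the correct Stein-type poset complex) and then invoke the connectivity estimates for these Fibonacci-flavoured complexes — these have essentially been carried out for $F_\tau$ in \cite{burilloF}, and the circle version requires only mild modifications to account for the cyclic (rather than linear) ordering of the pieces, contributing one extra factor corresponding to the ``wrap-around'' arc. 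I would isolate this connectivity statement as a separate lemma, prove it by the usual Morse-theoretic descending-link induction, and then assemble the three hypotheses to conclude via Proposition \ref{prop:brown}. Finally, since the total lift $\overline{T_\tau}$ is a central extension of $T_\tau$ by $\mathbf{Z}$, and type $F_\infty$ is preserved under such extensions (as noted in the Finiteness properties subsection), the same conclusion holds for $\overline{T_\tau} = G$.
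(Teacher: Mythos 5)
Your high-level frame agrees with the paper's: both apply Brown's criterion (Proposition \ref{prop:brown}), both get cocompactness from transitivity, and both reduce the stabilizer condition to the fact that finite direct products of $F_\tau$ are of type $F_\infty$. But the complex you choose is very different and much heavier than what the paper uses. The paper does not touch the Stein--Brown subdivision poset at all: it takes $X$ to be the full simplicial complex (an infinite simplex) whose $n$-simplices are the $(n+1)$-element subsets of $\mathbf{Z}[\tau]/\mathbf{Z}$. Contractibility is then trivial, cocompactness in each dimension follows from transitivity on circularly ordered tuples (Lemma \ref{lem:transT}), and the stabilizer of an $n$-simplex is $F_\tau^{n+1}$, of type $F_\infty$ by Cleary's result (Proposition \ref{prop:Ffin}, which is due to \cite{cleary}, not \cite{burilloF} as you cite). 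No filtration, no descending links, no connectivity estimates are needed.

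In your route, by contrast, the connectivity of the Stein-type complex is exactly the hard part, and it is not proved: you assert that the descending-link estimates have ``essentially been carried out for $F_\tau$'' and need only mild modification for the circle. That is not something you can quote. The existing $F_\infty$ proof for $F_\tau$ is Cleary's argument about regular subdivisions of the interval, not an off-the-shelf descending-link computation for a Stein complex, and the golden-ratio relation $\tau + \tau^2 = 1$ makes tree-pair representatives of a subdivision non-unique, so the poset of ``elementary'' refinements does not behave like the dyadic Stein complex; identifying the right notion of elementary expansion and proving the Fibonacci-flavoured connectivity bounds (plus handling the cyclic wrap-around, and the fact that subdivision stabilizers in the circle case are extensions of products of $F_\tau$ by finite cyclic rotation groups) would be a genuine piece of new work, not a routine adaptation. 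So as written the proposal has a real gap precisely at the step you flag as the main obstacle. The lesson of the paper's proof is that once $F_\tau$ is known to be of type $F_\infty$, one can choose the complex so large that contractibility comes for free and this entire analysis is avoided; your final remark about passing to the central extension $\overline{T_\tau}$ is correct and matches the paper.
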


Finite presentability (i.e., type $F_2$) was already proven in \cite{burilloT}, where moreover a concrete presentation is provided. We will prove Proposition \ref{prop:Tfin} using Brown's finiteness criterion (Proposition \ref{prop:brown}). The starting point is the analogous result of Cleary for $F_\tau$:

\begin{proposition}[Cleary \cite{cleary}]
\label{prop:Ffin}

The group $F_\tau$ is of type $F_\infty$.
\end{proposition}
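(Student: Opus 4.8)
The plan is to follow the Brown--Stein method by which Thompson's group $F$ is shown to be of type $F_\infty$, adapted to the golden ratio by Cleary \cite{cleary}; the only genuinely new ingredient is combinatorial.

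First I would set up the subdivision combinatorics. Call a finite partition $0 = a_0 < a_1 < \cdots < a_n = 1$ a \emph{golden subdivision} of $[0,1]$ if it arises from the trivial partition by repeatedly replacing some interval $[a,b]$ by one of the two pairs $\{[a, a+\tau(b-a)], [a+\tau(b-a), b]\}$ or $\{[a, a+\tau^2(b-a)], [a+\tau^2(b-a), b]\}$; using the identity $\tau^2 = 1 - \tau$, every interval of such a partition has length a nonnegative power of $\tau$, and every breakpoint lies in $\mathbf{Z}[\tau]$. A golden subdivision is encoded by a finite rooted binary tree with two colours of carets, and an element of $F_\tau$ is represented by a pair of such trees with the same number of leaves, acting by carrying the $i$-th interval of the source affinely onto the $i$-th interval of the target --- the slopes are automatically powers of $\tau$, because the intervals have lengths that are powers of $\tau$. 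The crucial first step, and the place where the golden ratio genuinely enters, is to establish a normal form for two-coloured carets: unlike in the dyadic case, distinct sequences of elementary cuts may produce the same subdivision, so one must show that the natural caret-reduction moves on tree-pairs are confluent. This yields a unique reduced diagram for each element of $F_\tau$ and good combinatorial control over the directed poset $\mathcal{P}_\tau$ of golden subdivisions ordered by refinement; this ``regular subdivision'' analysis is the combinatorial heart of \cite{cleary} (see also \cite{burilloF}).

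Next I would build from $\mathcal{P}_\tau$ a contractible Stein-type complex $X$ carrying an $F_\tau$-action. Working with reduced diagrams (which the normal form makes well defined), one can take $X$ to be a cube complex whose vertices are the golden subdivisions of $[0,1]$ and whose cubes record collections of independent elementary expansions, with $F_\tau$ acting by concatenation followed by reduction; as for Thompson's group $F$, this action is \emph{free} --- which is precisely the cancellativity of reduced diagrams --- so all cell stabilizers are trivial and Brown's criterion imposes no finiteness condition on stabilizers. Since $X$ is infinite-dimensional and the action is not cocompact, one uses Brown's criterion in its filtered form \cite{brown_fp} (see also \cite{brown_book, alonso}) rather than Proposition \ref{prop:brown}: one filters $X$ by the $F_\tau$-cocompact subcomplexes $X_{\leq m}$ spanned by subdivisions with at most $m$ intervals, takes the Morse function ``number of intervals,'' and reduces the whole statement --- including the contractibility of $X$ --- to showing that a suitable family of descending links, indexed by subdivisions with $n$ intervals, becomes arbitrarily highly connected as $n \to \infty$.

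This last connectivity estimate is the hard part. The descending link of a golden subdivision with $n$ intervals is a simplicial complex assembled from its intervals, the choices of which carets to remove, and the reduction relations between the two caret colours; establishing its high connectivity is a matching-complex-type argument, substantially complicated relative to the classical dyadic case by the presence of two caret types and the confluence relations among them. Granting it, Brown's criterion gives that $F_\tau$ is of type $F_n$ for every $n$, hence of type $F_\infty$.
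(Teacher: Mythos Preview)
The paper does not prove this proposition at all: it is quoted verbatim as a result of Cleary \cite{cleary} and used as a black box in the proof of Proposition~\ref{prop:Tfin}. So there is no ``paper's own proof'' to compare against.

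Your sketch is a reasonable high-level outline of the Brown--Stein strategy that Cleary carries out in \cite{cleary}: build a directed poset of golden subdivisions, form the associated Stein-type complex, filter by number of intervals, and apply the filtered form of Brown's criterion by establishing increasing connectivity of descending links. The emphasis you place on the combinatorial normal form for two-coloured carets and the resulting confluence is exactly the point where the golden-ratio case diverges from the dyadic one, and is indeed the substance of Cleary's contribution. That said, what you have written is an outline rather than a proof: the connectivity of the descending links is asserted, not argued, and this is where essentially all the work lies. If you intend this as more than a pointer to \cite{cleary}, that step needs content; if you intend it as a summary of Cleary's argument, it would be more honest to say so and simply cite the result, as the paper does.
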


\begin{proof}[Proof of Proposition \ref{prop:Tfin}]
We construct a simplicial complex $X$ as follows: the $n$-simplices are given by $(n+1)$-tuples $(x_0, \ldots, x_n)$ of elements in $\mathbf{Z}[\tau]/\mathbf{Z}$, and the face relation is given by inclusion. This is clearly contractible, and endowed with a natural action of $T_\tau$. In order to apply Brown's finiteness criterion (Proposition \ref{prop:brown}) we need to show that the action is cocompact on simplices of every given dimension, and that the stabilizer of a simplex is of type $F_\infty$.

Recall from Lemma \ref{lem:transT} that $T_\tau$ acts transitively on the set of circularly ordered $n$-tuples in $\mathbf{Z}[\tau]/\mathbf{Z}$. It follows that the action the $n$-simplices of $X$ has at most $n!$ orbits, and thus it is cocompact.
By Lemma \ref{lem:transT} again, the stabilizer of an $n$-simplex is isomorphic to $F_\tau^{n+1}$. In particular it is of type $F_\infty$ since this is a property closed under finite direct sums.
\end{proof}

\subsection{The lift of $T_\tau$}

We now move to the total lift $G := \overline{T_\tau}$ of $T_\tau$, which will be the group in Theorem \ref{thm:main}. As a first step, we need to show that $G$ also has abelianization $\mathbf{Z}/2\mathbf{Z}$. To this end, we consider the total lift $H := \overline{T_\tau'}$ of $T_\tau'$.

\begin{lemma}\label{lemma: lift}
The total lift $G$ is the unique lift of $T_\tau$ to the real line. 
The same holds for the total lift $H$ of $T_\tau'$. 
Moreover, it holds that $G'=H$ and $[G:G']=2$.
\end{lemma}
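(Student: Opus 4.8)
The plan is to deduce all four assertions from two inputs: (a) for every $n\geq 1$ the group $T_\tau$ contains an element of order $n$; and (b) the total lift $\overline{T_\tau'}$ is perfect. Throughout I write $t_m\in\ker\eta$ for the translation $x\mapsto x+m$, so that $\ker\eta=\{t_m:m\in\mathbf Z\}$.

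\emph{Uniqueness of lifts.} Any lift $L$ of $T_\tau$ is a subgroup of $\overline{\Homeo^+(\mathbf R/\mathbf Z)}$ with $\eta(L)=T_\tau$, hence $L\leq\eta^{-1}(T_\tau)=\overline{T_\tau}$, and since $\ker\eta=\mathbf Z$ one has $L\cdot\mathbf Z=\overline{T_\tau}$ and $L\cap\mathbf Z=n\mathbf Z$ for some $n\geq 0$; it suffices to show $n=1$, for then $L=L\cdot\mathbf Z=\overline{T_\tau}$. Fix a prime $p$ and, using (a), an element $\rho\in T_\tau$ of order exactly $p$. Since $\Homeo^+(\mathbf R)$ is torsion free, $\rho$ has no fixed point on $\mathbf R/\mathbf Z$ (a fixed point would yield a finite-order, hence trivial, homeomorphism of the complementary arc), so $\rho$ is conjugate in $\Homeo^+(\mathbf R/\mathbf Z)$ to a rotation by $j/p$ with $p\nmid j$; therefore any lift $\tilde\rho\in\overline{T_\tau}$ has $\rot(\tilde\rho)\in\frac{1}{p}\mathbf Z\setminus\mathbf Z$, and so $\tilde\rho^{\,p}=t_m$ with $m=p\,\rot(\tilde\rho)$ and $p\nmid m$. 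Choosing $\tilde\rho\in L$ gives $t_m\in L\cap\mathbf Z=n\mathbf Z$, hence $p\nmid n$; letting $p$ range over all primes forces $n=1$ (the case $n=0$ being excluded since $t_m\neq\mathrm{id}$), so $L=\overline{T_\tau}$. The same argument applies to $T_\tau'$ once one knows it too contains an element of order $p$ for every prime $p$: for odd $p$ this follows from (a) since $[T_\tau:T_\tau']=2$ (Proposition \ref{prop:index2T}), and for $p=2$ one takes the square of an order-$4$ element of $T_\tau$, which lies in $T_\tau'$ because its image in $T_\tau/T_\tau'\cong\mathbf Z/2\mathbf Z$ vanishes.

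\emph{Input (a).} Pick points $0=p_0<p_1<\dots<p_{n-1}<1$ in $\mathbf Z[\tau]/\mathbf Z$, cutting $\mathbf R/\mathbf Z$ into arcs $A_i=[p_i,p_{i+1}]$ (indices mod $n$). By the self-similarity of $F_\tau$ — the isomorphisms $F_\tau[a,b]\cong F_\tau$ of \cite{burilloF}, as used in Lemma \ref{lem:transF} — any two arcs with endpoints in $\mathbf Z[\tau]/\mathbf Z$ are joined by a piecewise linear homeomorphism with slopes powers of $\tau$ and breakpoints in $\mathbf Z[\tau]$. Choosing such maps $\phi_i\colon A_i\to A_{i+1}$ for $0\leq i\leq n-2$, setting $\phi_{n-1}:=(\phi_{n-2}\circ\dots\circ\phi_0)^{-1}\colon A_{n-1}\to A_0$, and gluing the $\phi_i$, one obtains $g\in T_\tau$ that cyclically permutes the $A_i$ and satisfies $g^n=\mathrm{id}$, hence has order exactly $n$. (Alternatively, the torsion generators $c_n$ of the presentation in \cite{burilloT} already exhibit elements of every finite order.)

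\emph{Input (b) and the remaining claims.} Since $T_\tau'$ is uniformly simple and nontrivial (Proposition \ref{prop:Tus}), it is perfect. The group $\overline{T_\tau'}=\eta^{-1}(T_\tau')$ is generated by $\ker\eta$ together with lifts of a generating set of $T_\tau'$, and each such generator, being a product of commutators, has a lift lying in $\overline{T_\tau'}'\cdot\ker\eta$; hence $\overline{T_\tau'}=\overline{T_\tau'}'\cdot\ker\eta$, so $\overline{T_\tau'}/\overline{T_\tau'}'$ is cyclic, generated by the class of $t_1$. Now for each prime $p$, an order-$p$ element $\rho\in T_\tau'$ (available by the uniqueness-of-lifts discussion) and a lift $\tilde\rho$ give $\tilde\rho^{\,p}=t_m$ with $p\nmid m$, whence $p\,[\tilde\rho]=m\,[t_1]$ in $\overline{T_\tau'}/\overline{T_\tau'}'$, and $\gcd(m,p)=1$ forces $[t_1]$ to be $p$-divisible there. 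A cyclic group generated by an element divisible by every prime is divisible, hence trivial; so $\overline{T_\tau'}$ is perfect, and in particular $t_1\in\overline{T_\tau'}'\subseteq\overline{T_\tau}'$. Consequently $\ker\eta\subseteq\overline{T_\tau}'$, so $\overline{T_\tau}'=\eta^{-1}(\eta(\overline{T_\tau}'))=\eta^{-1}([T_\tau,T_\tau])=\eta^{-1}(T_\tau')=H$, and $[G:G']=[\overline{T_\tau}:\overline{T_\tau'}]=[T_\tau:T_\tau']=2$ because $\overline{T_\tau}$ and $\overline{T_\tau'}$ both contain $\ker\eta$. The main obstacle is input (a); once it is in hand, everything else is formal bookkeeping with the central extensions.
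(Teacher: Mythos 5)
Your proof is correct, but it follows a genuinely different route from the paper's. The paper proves uniqueness of lifts dynamically: inside an arbitrary lift $K$ of $T_\tau'$, commutators of lifts of elements of $T_\tau(x)'$ fix the fiber $x+\mathbf{Z}$ pointwise, so $K$ contains a canonical copy $K_x$ of $T_\tau(x)'$ for every $x$, and these copies together generate a group containing all integer translations; the identity $G'=H$ is then immediate because $G'$ is itself a lift of $T_\tau'$. You instead force the translations into any lift using torsion: an element of order $k$ in $T_\tau$ (or $T_\tau'$) lifts to an element whose $k$-th power is translation by an integer coprime to $k$, and letting $k$ run over the primes pins $L\cap\mathbf{Z}$ down to all of $\mathbf{Z}$; you then obtain $G'=H$ by proving $H$ is perfect via a divisibility argument in the cyclic abelianization $H/H'$. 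Your route yields an explicit extra conclusion (perfectness of $H=G'$, which the paper only gets implicitly and uses in its final remark) and avoids the paper's ``elementary exercise'' that the groups $K_x$ generate the translations; its cost is input (a), the existence of elements of every finite order in $T_\tau$, which the paper never needs. That input is sound as you set it up, with two small touch-ups: the maps $\phi_i$ must be taken in the $\tau$-PL category (breakpoints in $\mathbf{Z}[\tau]$, slopes in $\tau^{\mathbf{Z}}$, hence automatically preserving $\mathbf{Z}[\tau]$), which is exactly what the conjugating maps behind \cite[Proposition 6.2]{burilloF} provide --- or, more simply, take a standard $\tau$-regular subdivision of the circle into $n$ intervals whose lengths are powers of $\tau$ and map each linearly onto the next; and ``uniformly simple and nontrivial, hence perfect'' should invoke nonabelianness (a simple abelian group is not perfect; of course $T_\tau'$ is nonabelian). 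Note finally that once your uniqueness statement for lifts of $T_\tau'$ is established, $G'=H$ already follows from $\eta(G')=\eta(G)'=T_\tau'$, so the perfectness detour, while a nice bonus, is not logically needed for the last two claims.
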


\begin{proof}
We will show the first statement for $H$, a similar argument applies to $G$.
Consider an arbitrary lift $K\leq \overline{\Homeo^+(\mathbf{R/Z})}$ of $T_{\tau}'$.
To show that it is equal to $H$, it suffices to show that $K$ must contain all integer translations.

Recall the map $\eta:\overline{\Homeo^+(\mathbf{R/Z})}\to \Homeo^+(\mathbf{R/Z})$.
Let $x\in \mathbf{R/Z}$.
Let $f_1,f_2\in T_{\tau}(x)'$ and let $\lambda_1,\lambda_2\in K$ be elements such that $\eta(\lambda_i)=f_i$. Since each $f_i$ fixes $x$, each $\lambda_i$ preserves $x+\mathbf{Z}$ in $\mathbf{R}$.
It follows that the element $\lambda=[\lambda_1,\lambda_2]$ fixes $x+\mathbf{Z}$ pointwise, and 
$\eta(\lambda)=[f_1,f_2]$. Since $T_{\tau}(x)'$ is generated by commutators, we conclude that the group $K$ contains the lift $K_x$ of $T_{\tau}(x)'$ that fixes $x+\mathbf{Z}$ pointwise, as a subgroup. This holds for each $x\in \mathbf{R/Z}$. 
It is an elementary exercise to show that the group generated by the groups $\{K_x\mid x\in \mathbf{R/Z}\}$ contains all integer translations. \\

Since $G'$ and $H$ are both lifts of $T_{\tau}'$, by the previous paragraph they coincide. Now $H$ is the preimage under $\eta$ of an index-two subgroup of $T_\tau$, so it has index $2$ in $G = \eta^{-1}(T_\tau)$.
\end{proof}

In order to apply Theorem \ref{thm:zhuang} to $G$, we are only missing the computation of the defect of the rotation number:

\begin{proposition}
\label{prop:rot}

$\rot|_G$ is a quasimorphism of defect $1$.
\end{proposition}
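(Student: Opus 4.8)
The plan is to show that the defect of $\rot|_G$ is exactly $1$ by combining two facts: on one hand, Example~\ref{ex:rot} gives $D(\rot|_G) \leq 1$ for free, since $\rot$ has defect $1$ on all of $\overline{\Homeo^+(\mathbf{R}/\mathbf{Z})}$; on the other hand, we must exhibit (or approximate) two elements $f, h \in G$ witnessing $|\rot(f) + \rot(h) - \rot(fh)|$ arbitrarily close to $1$. The natural place to look is inside the rotation subgroup and its interaction with the ``Thompson-like'' part: $T_\tau$ contains all rotations by angles in $\mathbf{Z}[\tau]/\mathbf{Z}$, and since $\mathbf{Z}[\tau]$ is dense in $\mathbf{R}$, these rotations are dense. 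The classical computation for $\overline{\Homeo^+(\mathbf{R}/\mathbf{Z})}$ (see \cite{calegari, bg}) shows that the supremum $1$ is approached by taking $f$ to be (a lift of) a rotation by an angle close to $1$ and $h$ a map that is the identity near $0$ but ``pushes'' a point almost all the way around; the defect cocycle $\rot(f)+\rot(h)-\rot(fh)$ then detects whether the orbit of $0$ crosses an integer. The key point is that $T_\tau$ is rich enough to contain such $h$: using proximality of the $F_\tau$-action (Lemma~\ref{lem:nTransPrime}) or direct PL constructions, one can find elements of $T_\tau$ with rotation number $0$ that displace a chosen point $x \in \mathbf{Z}[\tau]/\mathbf{Z}$ arbitrarily far in the circular order.

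Concretely, I would proceed as follows. First, fix an integer $n$ and choose an angle $\theta = \theta_n \in \mathbf{Z}[\tau] \cap (1 - \tfrac{1}{n}, 1)$; let $r_\theta \in T_\tau$ be the corresponding rotation and $\overline{r_\theta} \in G$ its lift with $\rot(\overline{r_\theta}) = \theta$ (the lift fixing, say, the set $\mathbf{Z} \subset \mathbf{R}$ up to the obvious shift). Second, construct $g_n \in T_\tau$ with a fixed point (hence $\rot(g_n) = 0$) such that some point $0 < p < 1$ is sent by $g_n$ to a point close to $1$; lift it to $\overline{g_n} \in G$ with $\rot(\overline{g_n}) = 0$. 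Then one arranges that the composition $\overline{r_\theta}\,\overline{g_n}$, evaluated on a suitable basepoint, advances past an integer, forcing $\rot(\overline{r_\theta}\,\overline{g_n})$ to be small (close to $0$) while $\rot(\overline{r_\theta}) + \rot(\overline{g_n}) = \theta$ is close to $1$. Taking $n \to \infty$ yields $D(\rot|_G) \geq 1$, and combined with the upper bound this gives equality. I would phrase the estimate using the standard inequality $\lfloor \rot(ab) \rfloor \leq \lfloor 0\cdot a \rfloor + \lceil 0 \cdot b \rceil$-type bounds, or more cleanly via the fact that $\rot(g) \in [0,1)$ can be read off from whether $0 \cdot g^N$ outpaces $N$, as in \cite[Proposition 2.92]{calegari}.

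An alternative, possibly cleaner route avoids explicit estimates: observe that $G = \overline{T_\tau}$ surjects onto $T_\tau$ with kernel the central $\mathbf{Z}$, and that $\rot|_G$ descends to the (bounded) Euler-type class on $T_\tau$; the defect of $\rot|_G$ equals the supremum of the corresponding bounded cocycle, which is $1$ precisely when the action of $T_\tau$ on $\mathbf{S}^1$ is ``sufficiently proximal'' — and here one can invoke that $T_\tau$ contains $T_\tau'$, whose action is minimal and proximal (this follows from the transitivity results, Lemmas~\ref{lem:transT} and \ref{lem:nTransPrime}). For a minimal proximal action on the circle the bounded Euler class has norm exactly $\tfrac{1}{2}$, equivalently the rotation quasimorphism on the lift has defect $1$; one can cite this or re-derive it. I would likely present the hands-on construction as the main argument since it keeps the paper self-contained, and mention the proximality viewpoint as a remark.

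The main obstacle I anticipate is the lower bound: producing, for each $n$, an \emph{honest element of $T_\tau$} (PL, breakpoints and slopes in the right sets) realizing the near-extremal configuration, and then tracking the rotation number of the product through the central extension carefully enough to get the estimate $\to 1$ rather than merely $> 0$. The upper bound is immediate, and the existence of suitable $g_n$ follows from proximality, but verifying that the composition genuinely crosses an integer — i.e., pinning down the right lifts and checking $0 \cdot (\overline{r_\theta}\,\overline{g_n})^N$ grows slower than $N$ — is where the care is needed. Everything else (density of $\mathbf{Z}[\tau]$, the containment of rotations, the bound $D \leq 1$) is routine given the results already established in the excerpt.
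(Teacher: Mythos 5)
Your upper bound $D(\rot|_G)\le 1$ is indeed immediate, and the general strategy of exhibiting pairs in $G$ whose defect quantity approaches $1$ can be made to work; but the concrete configuration you describe does not. You take $\theta$ \emph{close to $1$}, $\rot(\overline{r_\theta})=\theta$, and $g_n$ with $\rot(\overline{g_n})=0$ which \emph{pushes a point forward} so that the composition ``advances past an integer''. If the composition advances past an integer, its rotation number is close to $1$, not close to $0$: tracking the orbit of $0$, each iteration gains roughly one full integer, so $\rot(\overline{r_\theta}\,\overline{g_n})\approx 1$ and the defect quantity is $|\theta+0-\rot(\overline{r_\theta}\,\overline{g_n})|\approx 1-\theta\to 0$. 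Your two choices cancel each other. The repair is to decouple them. Either take $\theta\in\mathbf{Z}[\tau]\cap(0,\tfrac1n)$ \emph{small} and $g\in T_\tau$ fixing $0$ with $p\cdot g=q$, where $p\in\mathbf{Z}[\tau]\cap(0,\theta)$ and $q\in\mathbf{Z}[\tau]\cap(1-\delta,1)$ with $p+\delta<\theta$ (such $g$ exists by Lemma \ref{lem:transF}); lifting $g$ to $\overline{g}$ fixing $\mathbf{Z}$, one checks $0\cdot(\overline{r_\theta}\,\overline{g})^k\in[k-\delta,k)$ for all $k$, so $\rot(\overline{r_\theta}\,\overline{g})=1$ and the quantity is $1-\theta\to1$. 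Or keep $\theta\in\mathbf{Z}[\tau]\cap(1-\tfrac1n,1)$ and use an element that \emph{pulls back} by $\theta$: choose $h\in T_\tau$ fixing $0$ with $(p+\theta)\cdot h=p$ for some $p\in\mathbf{Z}[\tau]\cap(0,1-\theta)$; then the lift $\overline{r_\theta}\,\overline{h}$ (with $\overline{h}$ fixing $\mathbf{Z}$) has an honest fixed point at $p$, hence rotation number $0$, and the quantity is $\theta\to1$. Both versions live in $G$ by construction, so after this correction your hands-on route does prove the proposition.

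For comparison, the paper's proof is different and much shorter: it first shows (Lemma \ref{lem:dense}) that $G$ is $C^0$-dense in $\overline{\Homeo^+(\mathbf{R}/\mathbf{Z})}$, using transitivity on circularly ordered tuples in the dense set $\mathbf{Z}[\tau]/\mathbf{Z}$ (Lemma \ref{lem:transT}), and then combines continuity of $\rot$ in the $C^0$-topology with the fact that the defect on the full group is $1$ (Example \ref{ex:rot}); no explicit witnesses are needed. Your explicit construction, once fixed, has the virtue of being self-contained and quantitative, at the cost of the orbit-tracking you anticipated. Finally, be cautious with your proposed ``alternative route'': the assertion that every minimal proximal circle action has real bounded Euler class of norm exactly $\tfrac12$ (equivalently, $\rot$ of defect $1$ on the lift) is not something you can cite off the shelf at that level of generality --- for lifts of thin groups such as lattices the defect computation is genuinely delicate --- and the density argument above is precisely the clean substitute available in this setting.
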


This essentially reduces to the following (see also \cite[Proposition 4.3]{zhuang}):

\begin{lemma}
\label{lem:dense}

$G$ is dense in $\overline{\Homeo^+(\mathbf{R}/\mathbf{Z})}$ with respect to the $C^0$-topology, that is, the topology induced by the supremum norm.
\end{lemma}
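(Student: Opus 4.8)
The plan is to prove density of $G = \overline{T_\tau}$ in $\overline{\Homeo^+(\mathbf{R}/\mathbf{Z})}$ by reducing to a density statement for $T_\tau$ downstairs and then lifting. First I would observe that since $T_\tau$ contains all rotations by angles in $\mathbf{Z}[\tau]$, and $\mathbf{Z}[\tau]$ is dense in $\mathbf{R}$, the group $T_\tau$ contains rotations approximating any given rotation. More substantially, I would show that $T_\tau$ is $C^0$-dense in $\Homeo^+(\mathbf{R}/\mathbf{Z})$: given $f \in \Homeo^+(\mathbf{R}/\mathbf{Z})$ and $\varepsilon > 0$, pick a fine partition of the circle by points $p_0 < p_1 < \cdots < p_{k-1}$ in $\mathbf{Z}[\tau]/\mathbf{Z}$ whose images $p_i \cdot f$ are $\varepsilon$-close to points $q_i$ of $\mathbf{Z}[\tau]/\mathbf{Z}$ in the same circular order; then by Lemma \ref{lem:transT} there is an element $g \in T_\tau$ sending each $p_i$ to $q_i$. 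By making the partition fine enough (so that $f$ moves points within each subinterval by at most $\varepsilon$, using uniform continuity of $f$ and $f^{-1}$) one checks that $g$ is within, say, $3\varepsilon$ of $f$ in the supremum metric. This uses only the high transitivity of $T_\tau$ on $\mathbf{Z}[\tau]/\mathbf{Z}$-tuples, which is available.

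Next I would transfer this to the lifts. The map $\eta : \overline{\Homeo^+(\mathbf{R}/\mathbf{Z})} \to \Homeo^+(\mathbf{R}/\mathbf{Z})$ is a quotient by the central subgroup $\mathbf{Z}$ of integer translations, and a neighborhood basis of the identity in $\overline{\Homeo^+(\mathbf{R}/\mathbf{Z})}$ for the $C^0$-topology is given by maps $\phi$ with $\sup_{x \in \mathbf{R}} |x \cdot \phi - x| < \delta$ for small $\delta < 1$; for such small $\delta$, $\eta$ restricts to a homeomorphism from this neighborhood onto a neighborhood of the identity downstairs. Concretely, given $\bar{f} \in \overline{\Homeo^+(\mathbf{R}/\mathbf{Z})}$ and $\varepsilon < 1/2$, I would set $f = \eta(\bar{f})$, use the previous paragraph to find $g \in T_\tau$ with $d_{C^0}(g, f) < \varepsilon$, and then take the lift $\bar{g} \in \overline{T_\tau} = G$ of $g$ that agrees with $\bar{f}$ at, say, the point $0$ up to the appropriate integer. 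Since $\bar{g}$ and $\bar{f}$ project to $C^0$-close elements and are pinned at one point, one checks $\sup_x |x \cdot \bar{g} - x \cdot \bar{f}|$ is small — this requires a short argument that two lifts of $C^0$-close circle maps, normalized to be close at one point, are close everywhere (their difference is a lift of a near-identity circle map fixing a point near $0$, hence has small displacement). Here I should use that $G$ is exactly the total lift, so every lift of $g$ lies in $G$; alternatively $\overline{T_\tau}$ contains the integer translations by Lemma \ref{lemma: lift}, so I can adjust by an integer freely.

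The main obstacle I anticipate is the bookkeeping in the lifting step: ensuring that the supremum-norm estimate downstairs genuinely produces a supremum-norm estimate upstairs, uniformly over all of $\mathbf{R}$ rather than just on a fundamental domain, and handling the integer ambiguity correctly so that the approximant actually lies in $G$ and not merely in some coset. This is where one must be careful that "$C^0$-close on the circle" plus "agree at one point in $\mathbf{R}$" implies "$C^0$-close on $\mathbf{R}$" — which is true because both maps commute with integer translation, so it suffices to control the displacement on $[0,1]$, where it follows from continuity and the one-point normalization. The transitivity input (Lemma \ref{lem:transT}) and the structure of the total lift (Lemma \ref{lemma: lift}) do all the real work; the rest is routine $\varepsilon$-$\delta$ estimation, which I would not grind through in detail.
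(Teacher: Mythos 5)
Your proposal is correct and follows essentially the same route as the paper: density of $T_\tau$ in $\Homeo^+(\mathbf{R}/\mathbf{Z})$ via its high transitivity on the dense orbit $\mathbf{Z}[\tau]/\mathbf{Z}$ (Lemma \ref{lem:transT}), followed by passing to the total lift. The paper dispatches the lifting step as immediate, whereas you spell out the (correct) normalization-at-a-point argument and the fact that every lift of an element of $T_\tau$ lies in $G$; no substantive difference.
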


\begin{proof}
The group $T_\tau$ is dense in $\Homeo^+(\mathbf{R}/\mathbf{Z})$ in the $C^0$-topology,
since it acts transitively on circularly ordered $n$-tuples in the dense subset $\mathbf{Z}[\tau]/\mathbf{Z}$ for every $n \geq 1$ (Lemma \ref{lem:transT}).
It follows immediately that $G$ is dense in $\overline{\Homeo^+(\mathbf{R/Z})}$.
%Let $f \in \Homeo^+(\mathbf{R}/\mathbf{Z})$, and fix $\varepsilon > 0$. 
%Since $f$ is uniformly continuous, there exists a circularly ordered $n$-tuple $x_1, x_2, \ldots, x_n$ contained in $\mathbf{Z}[\tau]/\mathbf{Z}$ such that $|f(x_i) - f(x_{i+1})| < \frac{\varepsilon}{3}$. Since $\mathbf{Z}[\tau]/\mathbf{Z}$ is dense in $\mathbf{R}/\mathbf{Z}$, we can choose another $n$-tuple $y_1, y_2, \ldots, y_n$ in $\mathbf{R/Z}$ such that $|f(x_i) - y_i| < \frac{\varepsilon}{3}$. Moreover we can choose this $n$-tuple to be circularly ordered, since $f$ is orientation-preserving. By Lemma \ref{lem:transT}, there exists $g \in T_\tau$ sending $x_i$ to $y_i$ for every $i$. Then $\|f - g\|_\infty < \varepsilon$.
\end{proof}

\begin{proof}[Proof of Proposition \ref{prop:rot}]
Recall from Example \ref{ex:rot} that $\rot$ has defect $1$ on $\overline{\Homeo^+(\mathbf{R}/\mathbf{Z})}$ \cite[Proposition 2.92]{calegari}. Moreover, $\rot$ is continuous with respect to the $C^0$-topology \cite[Proposition 11.1.6]{ds}. Therefore it follows from Lemma \ref{lem:dense} that $\rot|_G$ also has defect $1$.
\end{proof}

Now we can prove our main Theorem \ref{thm:main}.

\begin{proof}[Proof of Theorem \ref{thm:main}]
Let $G=\overline{T_\tau}$ as before.
The group $T_\tau$ is of type $F_\infty$ by Proposition \ref{prop:Tfin}.
Since $G$ is an extension of the two type $F_\infty$ groups $\mathbf{Z}$ and $T_\tau$, it is also of type $F_\infty$.

By Proposition \ref{prop:Tus}, the group $T_\tau'$ is uniformly simple, in particular it is uniformly perfect, and so $\scl$ vanishes everywhere on $T_\tau'$. By Proposition \ref{prop:index2T}, the index of $T_\tau'$ in $T_\tau$ is $2$, so $\scl$ vanishes everywhere on $T_\tau$. Finally, Proposition \ref{prop:rot} shows that $\rot|_G$ has defect $1$. Therefore Theorem \ref{thm:zhuang} applies, and for every element $g \in G'$ we have $\scl(g) = \rot(g)/2$. \\

Now let $0 < \alpha \in \mathbf{Z}[\tau]$. The translation $f : t \mapsto t+\alpha$ on $\mathbf{R}$ descends to rotation by $\alpha \mod \mathbf{Z}$, so $f \in G$. Moreover $f^2 \in G'$, by Lemma \ref{lemma: lift}, and so
$$\scl(f) = \frac{\scl(f^2)}{2} = \frac{\rot(f^2)}{4} = \frac{\alpha}{2}.$$
\end{proof}

\begin{remark}
The group $T_\tau'$ is also of type $F_\infty$, since this is a quasi-isometry invariant and hence inherited by finite index subgroups \cite{alonso}. Therefore the same proof shows that $\scl(G')$ contains $\mathbf{Z}[\tau]$, and provides a finitely presented (and type $F_{\infty}$) perfect group with algebraic irrational $\scl$. 
%Moreover, one can check using tree diagrams \cite{burilloT} that all rotations by angles in $\mathbf{Z}[\tau]/\mathbf{Z}$ are in $T_\tau'$, so actually $\scl(G')$ contains $\frac{1}{2} \mathbf{Z}[\tau]$, too.
\end{remark}

\section{Further examples}

We have given a concrete answer to Question \ref{q}, which gives new insight into the spectrum of $\scl$ on finitely presented groups. It is possible that our proof can be generalized to reach other \emph{metallic ratios}, i.e., quadratic irrationals of the form
$$\lambda := \frac{-n \pm \sqrt{n^2 + 4}}{2}.$$
Indeed, as noted by Cleary at the end of \cite{sqrt2}, most of the arguments establishing finite presentability appear to carry over to that setting. %We have chosen to focus on the golden ratio because the combinatorial analysis was already carried over in \cite{burilloF, burilloT}: for the other examples, a similar analysis would be needed to obtain the analogues of Propositions \ref{prop:index2F} and \ref{prop:index2T}. \\
On the other hand, it is not clear how to generalize the arguments to other algebraic integers. 
In \cite{cleary}, Cleary cites unpublished work generalizing the theory of regular subdivisions (carried over therein for the golden ratio) to algebraic rings. However, this work remains unpublished. Therefore we end by asking the following more specialized version of Question \ref{q}:

\begin{question}
Does every algebraic integer occur as the $\scl$ of a finitely presented group?
\end{question}

\bibliographystyle{abbrv}
\bibliography{references}

\end{document}